\newcommand{\ignore}[1]{}
\newtheorem{theorem}{Theorem}[section]
\newtheorem{lemma}[theorem]{Lemma}
\newtheorem{corollary}[theorem]{Corollary}
\newtheorem{proposition}[theorem]{Proposition}
\newtheorem{algorithm}[theorem]{Algorithm}
\theoremstyle{definition}
\newtheorem{definition}[theorem]{Definition}
\newtheorem{example}[theorem]{Example}
\newtheorem{cons}[theorem]{Construction}
\theoremstyle{remark}
\newtheorem{remark}[theorem]{Remark}
\numberwithin{equation}{section}
\newcommand{\bZ}{{\mathbb{Z}}}
\definecolor{grey}{rgb}{0.75,0.75,0.75}
\definecolor{orange}{rgb}{1.0,0.5,0.5}
\definecolor{brown}{rgb}{0.5,0.25,0.0}
\definecolor{pink}{rgb}{1.0,0.5,0.5}
\newcommand{\fM}{{\mathfrak m}}
\newcommand{\fa}{\mathfrak{a}}
\newcommand{\cC}{{\mathcal C}}
\newcommand{\cO}{{\mathcal O}}
\newcommand{\lra}{{\longrightarrow}}
\newcommand{\C}{\mathbb{C}}
\newcommand{\Oc}{\mathcal{O}}
\begin{document}

\title[Monomial generators of complete planar ideals]{Monomial generators of complete planar ideals}

\author[M. Alberich-Carrami\~nana]{Maria Alberich-Carrami\~nana}

\author[J. \`Alvarez Montaner]{Josep \`Alvarez Montaner}

\author[G. Blanco]{Guillem Blanco }

\address{Departament de Matem\`atiques\\
Univ. Polit\`ecnica de Catalunya\\ Av. Diagonal 647, Barcelona
08028, Spain} \email{Maria.Alberich@upc.edu, Josep.Alvarez@upc.edu, Guillem.Blanco@upc.edu}

\thanks{ All three authors are supported by Spanish Ministerio de
Econom\'ia y Competitividad MTM2015-69135-P. MAC and JAM are also
supported by Generalitat de Catalunya 2014SGR-634 project and they are with the Barcelona Graduate School of Mathematics
(BGSMath). MAC is
also with the Institut de Rob\`otica i Inform\`atica Industrial
(CSIC-UPC)}



\begin{abstract}
We provide an algorithm that computes a set of generators for any complete ideal in a smooth complex surface.
More interestingly, these generators admit a presentation as monomials in a set of maximal contact elements associated to the minimal log-resolution of the ideal.
Furthermore, the monomial expression given by our method is an equisingularity invariant of the ideal.
As an outcome, we provide a geometric method to compute the integral closure of a planar ideal and we apply our algorithm to some families of complete ideals.
\end{abstract}

\maketitle

\section{Introduction}

Let $(X,O)$ be a germ of smooth complex surface and  $\cO_{X,O}$ the ring of germs of holomorphic functions in a
neighbourhood of $O$, which we identify with $\C\{x,y\}$ by taking local coordinates, and let $\fM$ be the maximal ideal
at $O$.  Let $\pi: X' \rightarrow X$ be a proper birational morphism that can be achieved as a sequence of blow-ups along 
a set of points. Given an effective $\mathbb{Z}$-divisor $D$ in $X'$  we may consider its associated (sheaf) ideal
$\pi_{\ast}\Oc_{X'}(-D)$ whose
stalk at $O$ we simply denote as $H_D$. This type of ideals were systematically studied by Zariski
in \cite{zariski-thm}. They are  complete ideals of $\Oc_{X,O}$ and  $\fM$-primary
whenever $D$ has exceptional support. Among the class of divisors defining the same complete ideal
we may find a unique maximal representative which happens to have the property of being \emph{antinef}.
Actually,  Zariski \cite{zariski-thm} showed that that above correspondence is in fact an isomorphism of semigroups between the set of complete $\fM$-primary
ideals and the set of antinef divisors with exceptional support.

\vskip 2mm

The aim of this work is to make computationally explicit this correspondence. Namely, given a proper birational morphism $\pi: X' \rightarrow X$
and an antinef divisor $D$ in $X'$ we provide an algorithm that gives us a system of generators of the ideal $H_D$.
Moreover, our method captures the topological type of $D$ and gives an output that discerns between the equisingularity class of $H_D$ and its analytic type: it produces a set of monomial expressions, which are invariant of the equisingularity class of $H_D$ (or equivalently invariant of the topological type of $D$); and when the set of variables in these monomial expressions are substituted by a set of maximal contact elements of the resolution $\pi $, they give a system of generators of $H_D$.
Our approach uses two main ingredients: the Zariski decomposition of complete ideals into simple ones, and subtle properties in the theory of adjacent ideals, which relay on results obtained in the study of sandwiched singularities and the Nash conjecture of arcs on these singularities.

\vskip 2mm

Our main result is an algorithm (see Algorithm \ref{alg:A1})  which we briefly describe.
We start by fixing a set $\{f_0,\dots,f_g\} $ of {\it maximal contact elements} associated to $\pi$. These are irreducible curves that are geometrically adapted to the proper birational morphism in the sense that they are parameterized by the dead-ends of the dual graph of the divisor $D$ (see Subsection \ref{max_contact} for details).
%
Once we have this fixed set we consider the decomposition $D=\rho_1 D_1 + \cdots +\rho_r D_r$ of the divisor $D$ corresponding to the Zariski decomposition $H_D= H_{D_1}^{\rho_1}\cdots H_{D_r}^{\rho_r}$, where the ideals $H_{D_i}$ are simple and complete (see \cite{zariski-thm}, \cite{casas} for details).  For each ideal $H_{D_i}$ we compute in a very specific unique way  an {\it adjacent ideal below} $H_{D_i}$, i.e. a codimension one ideal $H_{\widehat{D}_i}\varsubsetneq H_{D_i}$, and we prove in Proposition \ref{prop:good-curve} that we may pick a maximal contact element (or a power of it)  belonging to $H_{D_i}$ but not belonging to $H_{\widehat{D}_i}$. These
adjacent ideals are no longer simple  so we may repeat the same procedure. This process finishes after finitely many steps since these adjacent ideals have {\it smaller support}  (see Lemma \ref{lemma:smaller-support}). We visualize the steps of our algorithm as a tree of ideals with leaves corresponding to the maximal ideal. From the elements of maximal contact that we pick at each step  we  construct a system of generators of $H_{D}$.

\vskip 2mm

As an outcome of our method we obtain a set of generators of  a planar complete ideal given by monomials in this fixed set of {maximal contact elements} (see Theorem \ref{thm:main}). That is, we obtain an expression of the form $H_D = (\mathbf{f}^{\boldsymbol{\alpha}_1},\ldots ,  \mathbf{f}^{\boldsymbol{\alpha}_r})$, where $\mathbf{f}^{\boldsymbol{\alpha}}= f_0^{a_0}\cdots f_g^{a_g}$ for  $\boldsymbol{\alpha}=(a_0, \ldots , a_g) \in \bZ^{g+1}_{\geq 0}$. More interestingly, if we fix another set of maximal contact elements $\{f'_0,\dots,f'_g\} $, the algorithm returns the same monomial expression (in this new set) for the generators of  $H_D$ since it only uses the information of the equisingularity types of the maximal contact elements. Actually, the monomial expression that we have for $H_D$ also works for any other ideal $H_{D'} \subseteq \cO_{X,O}$ whose associated  divisor $D'$ has the same weighted dual graph as $D$. Hence the monomial expression given by our method is an equisingularity invariant of the ideal.

\vskip 2mm

As a first application of our algorithm we provide a method to compute the integral closure of any ideal
$\fa \subseteq \cO_{X,O}$. More precisely, given a log-resolution $\pi: X' \rightarrow X$ of the ideal $\fa$, let $D$ be the effective Cartier divisor such that $\fa\cdot\cO_{X'} = \cO_{X'}\left(-D\right)$. Then, the integral closure $\overline{\fa}$ is the ideal $H_D$.
We should point out that, given a set of generators of the ideal $\fa$ we may compute the divisor $D$ using the  algorithm
that we developed in \cite{ACAMB16}.

\vskip 2mm

There are some general algorithms to compute the integral closure as those proposed by
Vasconcelos \cite{Vas91,Vas00}, de Jong \cite{deJ98}, Leonard and Pelikaan \cite{LP03} or Singh and Swanson \cite{SS}
which have been implemented in computer algebra systems such as {\tt Macaulay2} \cite{GS} or {\tt Singular} \cite{DGPS}.
When comparing these algorithms with our method in the case of planar ideals we notice that the system of generators
that we produce is not minimal in general.
However, our method reflects more closely some geometric properties.
For instance, if two different ideals belong to the same
\emph{equisingularity class}, meaning that they have the similar weighted cluster of base points \cite[8.3]{casas} or equivalently their minimal log-resolution have the same weighted dual graph,  their integral closure have the same monomial expression in the respective set of maximal contact elements.

\vskip 2mm

The usefulness of our method becomes still more apparent when dealing with families of complete ideals dominated
by the same log-resolution. This is the case of  \emph{multiplier ideals} which is an invariant of singularities that has received
a lot of attention in recent years. Combining the algorithms given in \cite{ACAMB16} and \cite{ACAMDC}
with Algorithm \ref{alg:A1} we develop a method that, given a set of generators of a planar ideal,  returns a
set of generators of the corresponding multiplier ideals.
Since multiplier ideals are invariant up to integral closure, we obtain a result that resembles a formula given by Howald \cite{howald} in the sense
that the multiplier ideals of a monomial ideal (in the set of maximal contact elements)  is monomial as well.

\vskip 2mm

Another interesting family of complete ideals was considered by Teissier in \cite{Teissier}. These ideals are described by valuative conditions
given by the intersection multiplicity of the elements of $\cO_{X,O}$ with a fixed germ of plane curve.
With the help of Algorithm \ref{alg:A1} we may provide an explicit system of generators for these ideals.

\vskip 2mm

Finally, we would like to mention that Casas-Alvero \cite{Cas98} proposes a geometrical procedure to obtain a minimal set of generators of the complete ideal $H_D$  based on a construction of a filtration by complete ideals, namely a chain of consecutive adjacent ideals between $H_D$ and $\fM H_D$.
The main drawback in order to computationally implement this method is that one does not know a priori which log-resolution of $H_D$ will dominate all the log-resolutions of each complete ideal appearing in the filtration, since it depends on the choices of the adjacent ideal at each step. In contrast, our procedure fixes a proper birational morphism $\pi$ from the very beginning  and we always keep working with complete ideals whose log-resolution is dominated by $\pi$.
The further points which must be blown up after $\pi$ in the Casas-Alvero's method can not be chosen following an intrinsic unique way which works for any ideal whose log-resolution is dominated by $\pi$. Moreover these choices in Casas-Alvero's method determine the generators of $H_D$: the generators must go through certain infinitely near points and have to skip some others.  Both sets of points are among those blown up by $\pi$ and by the further blowing-ups.
Therefore an hypothetical computational implementation of Casas-Alvero's method would suffer from a lack of robustness and invariance, which are the strengths of our method. 
For discounted, the fact that the monomial expression given by our method is an equisingularity invariant of the ideal is not achieved by any other previous method.

\vskip 2mm

The algorithms developed in this paper have been implemented in the computer algebra system {\tt Magma} \cite{Magma} and are  available at

\begin{center}
 {\tt github.com/gblanco92/IntegralClosureDim2}
\end{center}

\vskip 2mm

\section{Preliminaries} \label{prelim}

The aim of this section is to introduce all the background that we will use throughout this work.

\subsection{Proper birational morphisms and infinitely near points}

Let $X$ be a smooth complex surface and $\cO_{X,O}\cong \C\{x,y\}$ the ring of germs of holomorphic functions in a neighborhood of a smooth point $O\in X$ and $\fM=\fM_{X,O} \subseteq \cO_{X,O}$ the maximal ideal at $O$. Given a proper ideal $\fa \subseteq \cO_{X,O}$ we have a decomposition  $\fa= (a) \cdot \fa'$, where $a\in \cO_{X,O}$  is the greatest common divisor of the elements of $\fa$ and  $\fa'$ is $\fM$-primary.

\vskip 2mm



Let \( \pi : X' \rightarrow X \) a proper birational morphism, with $X'$ a smooth surface. Any such proper birational morphism can be achieved as a sequence of blow-ups $${\displaystyle \pi:X'=X_{r+1}\xrightarrow{}X_{r}\xrightarrow{}\cdots \xrightarrow{}X_1=X}$$ with $X_{i+1} = \textrm{Bl}_{p_i}X_i$ for a point $p_i \in X_i$. Denote by \( E = \textrm{Exc}(\pi) \) the exceptional locus.
Since we are in a local framework, we shall assume throughout this work that $E$ is connected.
The set $K$ of points which have been blown up gives an index set for the exceptional components $\{E_p\}_{p\in K}$ in \( E \). We say that these points are a cluster of {\it infinitely near to the origin}  and we may establish a {\it proximity relation} between them. Namely, we say that a point $q\in K$ is \emph{proximate} to $p\in K$ if and only if $q$ belongs to the exceptional divisor $E_p$ corresponding to $p$ as proper or infinitely near point. We will denote this relation as $q\rightarrow p$ and we collect all these relations by means of the {\it proximity matrix} ${P}=(P_{p,q})$ defined as:
$$P_{p,q}:= \left\{
\begin{array}{rcl}
 1 & \hskip 1mm & \mbox{if } p=q, \\
-1 & \hskip 1mm & \mbox{if } \ q\rightarrow p, \\
 0 & \hskip 1mm & \mbox{otherwise.}
\end{array} \right.
$$
The proximity matrix is related to the negative-definite {\it intersection matrix } $N=(E_p\cdot E_q)$ by the formula $N=-P^\top P$. An infinitely near point $q$ is proximate to just one or two points. In the former case we say that $q$ is a \emph{free point}, and in the later it is a \emph{satellite point}. Besides, one can establish a \emph{partial ordering} in $K$. Namely, $q \le p$ if and only if $p$ is infinitely near to $q$.

A \emph{log-resolution} of a proper ideal $ \fa$ is a proper birational morphism $\pi: X' \rightarrow X$ where $X'$ is smooth such that there is an effective Cartier divisor $F$ satisfying $\fa \cdot \cO_{X'} = \cO_{X'}(-F)$, and the divisor $F + \textrm{Exc}(\pi )$ has simple normal crossings. The  \emph{weighted  dual graph} of $\pi$  is the dual graph of the divisor  $F =\sum_{1 \leq i \leq s} v_i(F) E_i$ weighted by the values $\{  v_i (F) \}_{1 \leq i \leq s}$, where $E_i= E_{p_i}$ with $1 \leq i \leq r$ are the exceptional components, and $E_i$ with $r<i \leq s$ are the non-exceptional irreducible components of $F$.

\begin{definition}\label{EquisingIdeal}
Two proper ideals $\fa $ and $\fa '$ are \emph{equisingular}, or belong to the same \emph{equisingularity class} if one  of the following equivalent conditions hold:
\begin{itemize}
  \item They have respectively minimal log-resolutions with equal weighted dual graph.
  \item The divisors $F$ and $F'$ associated to their respective minimal log-resolutions are homeomorphic (topologically equivalent).
  \item The weighted cluster of base points of $\fa $ and $\fa '$ are similar in the sense of \cite[8.3]{casas} in case $\fa $ and $\fa '$ are $\fM$-primary.
\end{itemize}
\end{definition}

Notice that when dealing with principal ideals $\fa =(f) $ and $\fa '=(g)$ the notion of equisingularity of the ideals is equivalent to the classical notion of equisingularity (or topological equivalence) of the germs $f=0$ and $g=0$.
Equisingular ideals have equisingular generic members. Moreover, equisingular $\fM$-primary complete ideals have besides equal codimension.

\subsection{Divisor basis}

Let  $\Lambda_\pi:= \bigoplus_{p\in K} \bZ E_p $ be the lattice of integral divisors in $X'$ with exceptional support. We have two different basis of this $\bZ$-module given by the {\it strict transforms} and the {\it total transforms} of the exceptional components. For simplicity we will also denote the strict transforms by $E_p$ and the total transforms by  $\overline{E_p}$. In particular, any divisor $D \in \Lambda_\pi$ can be presented in two different ways
$$D =\sum_{p\in K} v_p(D) E_p =\sum_{p\in K} e_p(D) \overline{E}_p,$$
where the weights $v_p(D)$ (resp. $e_p(D)$) are the {\it values} (resp. {\it multiplicities}) of $D$.
The relation between  values and multiplicities is given by the proximity relations
\begin{equation} \label{eq:proximity}
  v_{q}(D)= e_q(D) + \sum_{q \rightarrow p} v_{p}(D).
 \end{equation}
that provide a base change formula ${\bf e}^\top ={P} \cdot {\bf v}^\top$, where  we collect the multiplicities and values in the vectors ${\bf e}=(e_p(D))_{p\in K}$  and ${\bf v}=(v_p(D))_{p\in K}$, respectively.

\vskip 2mm

Aside from the total and strict transform basis $\{\overline{E_p}\}_{p\in K}$ and $\{E_p\}_{p\in K}$ of the lattice $\Lambda_\pi$ of exceptional divisors, we may also consider the {\it branch basis} $\{B_p\}_{p\in K}$ defined as the dual of $\{-E_p\}_{p\in K}$ with respect to the intersection form. Any divisor $D \in \Lambda_\pi$ has a presentation
\begin{equation}
 D=\sum_{p\in K} \rho_p(D) B_p,
\end{equation}
where $\rho_p(D) = -D \cdot E_p$ is the \emph{excess} at $p$ and the relation between excesses and multiplicities is given by ${\boldsymbol{\rho}}^\top = P^\top {\bf e}^\top$, where ${\boldsymbol{\rho}}=(\rho_p(D))_{p\in K}$  
denote the vectors of excesses.

\vskip 2mm

The \emph{support} \( \textrm{Supp}(D) \) of a divisor \( D \in \Lambda_{\pi} \) is the union of irreducible divisors of \( D \), i.e. if \( D = \sum_{\alpha} v_{\alpha}(D) E_{\alpha} \) is the expression of \( D \) in the strict transform basis then \( \textrm{Supp}(D) = \{E_{\alpha} \ |\ v_{\alpha}(D) \neq 0 \} \). We will also consider the same construction in the total transform basis.
Namely, let \( D = \sum_\alpha e_\alpha(D) \overline{E}_\alpha \) be the  expression of \( D \) in the total transform basis, then  we define
its \emph{support in the total transform basis} as \( \textrm{Supp}_{\overline{E}}(D) = \{\overline{E}_{\alpha} \ |\ e_\alpha(D) \neq 0\} \). To avoid confusion with the usual notion of support, we will always be explicit when referring to the support in the total transform basis.

\vskip 2mm

\begin{remark}
Let $\xi : f=0$ be the germ of curve defined by an element $f \in \mathcal{O}_{X, O}$ satisfying that $\pi$ dominates a resolution of the principal ideal $(f)$. The {\it total transform} of $\xi$ is the pull-back $\overline{\xi}:=\pi^*f$ and its {\it strict transform} $\xi'$ is the closure of $\pi^{-1}(\xi - \{O\})$. Then we have a presentation $$\overline{\xi}=\xi' + \sum_{p\in K} v_p(f) E_p = \xi' + \sum_{p\in K} e_p(f)\overline{ E_p} = \xi' + \sum_{p\in K} \rho_p(f) B_p ,$$ where $v_p(f):=v_p(D), e_p(f):=e_p(D)$ and $\rho_p(f):=\rho_p(D)$  for  $D = \textrm{Div}(\pi^*f)_{\rm exc}$.
\end{remark}

\vskip 2mm

Throughout this work we will be interested in germs of curves associated to branch basis divisors.

\begin{cons} \label{remark:curvettes}
Given a point $p \in K$, let  $f_p \in \mathcal{O}_{X, O}$ be an irreducible element such that its strict transform by the resolution $\pi$ intersects transversely $E_p$ at a smooth point of $E$. We have that $B_p = \textrm{Div}(\pi^*f_p)_{\rm exc}$. Conversely, any $g \in \mathcal{O}_{X, O}$ with $B_p = \textrm{Div}(\pi^*g)_{\textrm{exc}}$ is irreducible and its strict transform by the resolution $\pi$ intersects transversely $E_p$ at a smooth point of $E$.
\end{cons}

Since the elements $f_p \in \mathcal{O}_{X, O}$ are irreducible, the points $q \in K$ such that $e_q(B_p) \ne 0$ are totally ordered. Furthermore, the resolution of any $f_p \in \mathcal{O}_{X, O}, \hskip 2mm p\in K$ is dominated by $\pi$. Moreover, any $f$ whose resolution is dominated by $\pi$  can be written as a product $f= \prod_{p } f_p^{\rho_p}$ of suitable elements $f_p$ obtained as in Construction \ref{remark:curvettes}.


\vskip 2mm

\begin{remark} \label{remark:values_as_intersection}
Let $\xi : f=0$, $\eta : g=0$ be germs of curves such that $\pi$ dominates their resolutions and set $D = \textrm{Div}(\pi^*f)_{\rm exc}$ and $C = \textrm{Div}(\pi^*g)_{\rm exc}$. Noether's intersection formula \cite[Theorem 3.3.1]{casas} gives an expression for their intersection multiplicity at $O$ as $[\xi . \eta ]_O = \sum_p e_p(D) \cdot e_p(C) =(\sum_p e_p(D) \overline{E}_{p})(\sum_{p} e_p(C) \overline{E}_p) = D \cdot C $. Hence, $v_p(f)= [\xi . \eta _p ]_O = D \cdot B_p $, where $\eta _p : f_p=0$, with $f_p \in \mathcal{O}_{X, O}$ as in Construction \ref{remark:curvettes}.
\end{remark}

\vskip 2mm

\subsection{Complete ideals and antinef divisors} \label{complete-antinef}

Given an effective divisor $D = \sum_{p\in K} v_p E_p$ $\in \Lambda_\pi$,  we may consider its associated ideal sheaf
$\pi_* \mathcal{O}_{X'}(-D)$. Its stalk at $O$ is
\begin{equation} \label{eq:ideal}
H_D = \{ f \in \mathcal{O}_{X, O}\ |\ v_p(f) \ge v_p \ \textrm{for all}\ E_p \le D\},
\end{equation}
 This ideal $H_D$ is \emph{complete}, see \cite{zariski-thm}, and $\mathfrak{m}$-primary since $D$ has only exceptional support. Complete ideals are closed under all standard operations on ideals, except addition: the intersection, product and quotient of complete ideals is complete.

\vskip 2mm

Recall that an effective divisor $D \in \textrm{Div}(X')$ is called \emph{antinef} if $\rho_p = -D \cdot E_p \ge 0$, for every exceptional component $E_p, p \in K$. 
This notion
is equivalent, in the total transform basis, to
\begin{equation} \label{eq:proximity-inequality}
e_p(D) \ge \sum_{q \rightarrow p} e_q(D), \quad \textrm{for all}\ p \in K.
\end{equation}
These are usually called \emph{proximity inequalities}, see \cite[\S4.2]{casas}. 
By means of the relation given in Equation \eqref{eq:ideal}, Zariski \cite{zariski-thm} establishes an isomorphism of semigroups between the set of ideals \( H_D \) and the set of antinef divisors in \( \Lambda_\pi \).

\vskip 2mm

Given a non-antinef divisor $D$, one can compute an equivalent antinef divisor $\widetilde{D}$, called the \emph{antinef closure}, under the equivalence relation that both divisors define the same ideal, i.e. $\pi_*\mathcal{O}_{X'}(-D) = \pi_*\mathcal{O}_{X'}(-\widetilde{D})$, and via the so called \emph{unloading} procedure. This is
an inductive procedure which was already
described in the work of Enriques \cite[IV.II.17]{EC15} (see \cite[\S 4.6]{casas} for more details). The version that we present here is the one considered in \cite{ACAMDC}.

\begin{algorithm}{\emph{(}Unloading procedure \cite{ACAMDC}\emph{)}}
\label{alg:unloading}
\vskip 1mm

\noindent {\tt Input:} A divisor $D = \sum d_p E_p \in \Lambda_\pi$.

\noindent {\tt Output:} The antinef closure $\widetilde{D}$ of $D$.

\vskip 1mm

{\textit{\textbf{Repeat:}}}

\begin{enumerate}
 \item [$\cdot$] Define $\Theta:= \{ E_p \in \Lambda_{\pi} \ |\ \rho_p = - D \cdot E_p <0 \}.$
   \vskip 0.5mm
 \item [$\cdot$] Let $n_p = \left \lceil \frac {\rho_p}{E_p^2} \right \rceil$ for each $E_p \in \Theta$. Notice that $(D + n_p E_p)\cdot E_p \le 0$.
 \item [$\cdot$] Define a new divisor as $\widetilde{D}= D + \sum_{E_p \in \Theta} n_p E_p$.
\end{enumerate}

{\textit{\textbf{Until}}} the resulting divisor $\widetilde{D}$ is antinef.
\end{algorithm}


\vskip 2mm

A fundamental result of Zariski \cite{zariski-thm} establishes the unique factorization of complete ideals into simple complete ideals, an ideal being simple if it is not the product of ideals different from the unit.
A reinterpretation of this result in a more geometrical context is given by Casas-Alvero in \cite[\S 8.4]{casas}.


\begin{theorem}{\cite{zariski-thm}, \cite[\S 8.4]{casas}} \label{thm:zariski}
Let \( D \in \Lambda_\pi \) an antinef divisor expressed as \( D = \sum_{p \in K} \rho_p B_p \) in the branch basis. Then,
\begin{equation} \label{eq:factorization}
  H_D = \prod_{p \in K} H_{B_p}^{\rho_p},
\end{equation}
with $H_{B_p}$ being simple complete ideals for any $p \in K$.
\end{theorem}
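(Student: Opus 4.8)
The plan is to reduce the statement to the structure of the semigroup of antinef divisors under the correspondence $D \mapsto H_D$, together with the identification of the branch divisors $B_p$ as the indecomposable generators of that semigroup. First I would recall that, by Zariski's isomorphism cited in Subsection \ref{complete-antinef}, the map sending an antinef divisor $D \in \Lambda_\pi$ to the complete $\fM$-primary ideal $H_D$ is an isomorphism of semigroups: $H_{D+D'} = H_D \cdot H_{D'}$ whenever $D, D'$ are antinef, and every complete $\fM$-primary ideal whose log-resolution is dominated by $\pi$ arises this way from a unique antinef $D$. Thus factoring $H_D$ into simple complete ideals is equivalent to decomposing the antinef divisor $D$ as a sum of antinef divisors that cannot be further decomposed within the monoid of antinef divisors, and showing each such indecomposable is a $B_p$.

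The key computation is that each branch divisor $B_p$ is itself antinef and that $H_{B_p}$ is simple. For antinefness of $B_p$: by definition $B_p$ is the dual of $-E_q$ under the intersection form, so $-B_p \cdot E_q = \delta_{pq} \geq 0$ for all $q \in K$, which is exactly the antinef condition; moreover $B_p$ is effective because the inverse of the intersection matrix has nonnegative entries (negative definiteness of $N = -P^\top P$ forces $-N^{-1} \geq 0$). For simplicity of $H_{B_p}$: the excess vector of $B_p$ is the standard basis vector $\mathbf{e}_p$, so if $B_p = D_1 + D_2$ with $D_1, D_2$ antinef and effective, then the excess vectors $\boldsymbol\rho(D_1), \boldsymbol\rho(D_2)$ are nonnegative and sum to $\mathbf{e}_p$; hence one of them is zero, forcing that $D_i = 0$ (an antinef effective divisor with vanishing excess is zero, again by negative definiteness). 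Translating back through Zariski's semigroup isomorphism, $H_{B_p} = H_{D_1} \cdot H_{D_2}$ with one factor the unit ideal, so $H_{B_p}$ is simple. Finally, writing $D = \sum_{p} \rho_p B_p$ with all $\rho_p = -D \cdot E_p \geq 0$ (the antinef hypothesis on $D$ is precisely this), the semigroup homomorphism gives
\[
  H_D = H_{\sum_p \rho_p B_p} = \prod_{p \in K} H_{B_p}^{\rho_p},
\]
which is the claimed factorization into simple complete ideals.

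I would present uniqueness of the factorization as an immediate consequence: any factorization of $H_D$ into simple complete ideals pulls back, via the inverse semigroup isomorphism, to a decomposition of $D$ into indecomposable antinef divisors, and by the excess-vector argument the only indecomposables are the $B_p$; since the $B_p$ form a $\bZ$-basis of $\Lambda_\pi$, the multiplicities $\rho_p$ are uniquely determined. The main obstacle — really the only non-formal point — is establishing that the indecomposable elements of the monoid of antinef effective divisors are exactly the branch divisors $B_p$, i.e. controlling both effectivity and the behavior of excess vectors under the partial order coming from the intersection form; all of this rests on the negative-definiteness of $N$ and the sign pattern of $P^{-1}$, so I would isolate that linear-algebra fact as the load-bearing lemma. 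The remaining steps are bookkeeping with the base-change formulas $\mathbf{e}^\top = P \cdot \mathbf{v}^\top$ and $\boldsymbol\rho^\top = P^\top \mathbf{e}^\top$ recorded in Subsection \ref{complete-antinef}.
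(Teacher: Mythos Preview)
The paper does not supply its own proof of Theorem~\ref{thm:zariski}: the result is quoted from Zariski \cite{zariski-thm} and Casas-Alvero \cite[\S 8.4]{casas} and used as a black box, so there is no argument in the text to compare yours against. Your outline is a faithful reconstruction of the standard proof via the semigroup isomorphism between antinef divisors and complete $\fM$-primary ideals, and the overall logic (indecomposables of the antinef monoid are exactly the $B_p$, hence $H_{B_p}$ simple, hence the product formula) is sound.

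One point deserves tightening. You write that $B_p$ is effective ``because the inverse of the intersection matrix has nonnegative entries (negative definiteness of $N=-P^\top P$ forces $-N^{-1}\ge 0$)''. Negative definiteness alone does not imply a sign pattern on the inverse; what you actually need is the specific structure of $P$. Since $P$ is unipotent upper-triangular with entries in $\{0,-1\}$ off the diagonal, $P^{-1}$ has nonnegative integer entries, and then $-N^{-1}=P^{-1}(P^{-1})^\top\ge 0$. Similarly, your claim that an antinef effective divisor with vanishing excess is zero follows simply from invertibility of $P$ (or of $N$), not from definiteness. These are cosmetic fixes; the load-bearing lemma you isolate is correct, but its justification should cite the shape of $P$ rather than definiteness of $N$.
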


In the sequel, we will call \emph{simple divisor} the unique antinef divisor defining a simple complete ideal. As a corollary of Theorem \ref{thm:zariski}, simple divisors in $\Lambda_\pi$ will always be equal to $B_p$ for some $p \in K$.

\vskip 2mm

\subsection{Maximal contact elements} \label{max_contact}
Let $\xi : f = 0$  be a germ of an irreducible element such that $\pi$ dominates the resolution of $(f)$. Its equisingularity class, or topological equivalence class, is determined by the set of {\it characteristic exponents} $\{m_1/n, \dots, m_g/n\}$, where $n = e_O(f)$ introduced in \cite{zariski-68}. Then, $f$ has a Puiseux series of the form
\begin{equation} \label{eq:puiseux}
s(x) = \sum_{\substack{j \in (n_0)\\ m_0 \leq j < m_1}} a_j x^{j/n}
+ \sum_{\substack{j \in (n_1)\\ m_1 \leq j < m_2}} a_j x^{j/n} +
\cdots + \sum_{\substack{j\in(n_{g-1}) \\ m_{g-1} \leq j < m_g}} a_j
x^{j/n} + \sum_{\substack{j \in (n_{g}) \\ j \ge m_g}}
a_j x^{j/n},
\end{equation}
where $m_0 = 0$, and $n_i = \gcd(n, m_1, \dots, m_i)$ so that, in particular, $n_0 = n$ and $n_g = 1$.

\vskip 2mm

Another way of describing the equisingularity class is using its {\it semigroup} (see \cite{zariski-86}). Consider $v_{\xi} : \mathcal{O}_{X, O} \rightarrow \mathbb{Z}$ the valuation induced by the intersection multiplicity of $\xi$ with another element $\zeta: h = 0, h \in \mathcal{O}_{X, O}, h \not\in (f)$. It is defined by $v_\xi(\zeta) = [\xi . \zeta]_O = \textrm{ord}_t\ h(t^n, s(t^n))$, and does not depend on the equation defining the germ $\zeta$ or the parameterization of $\xi$. Then, the semigroup of $\xi$ is
\[\Sigma(\xi) = \{ v_\xi(\zeta) \in \mathbb{Z}\ |\ \zeta : h = 0, h \in \mathcal{O}_{X, O},\ h \not\in (f) \},\]
and it is finitely generated. Namely,  $\Sigma(\xi) =  \langle n, \check{m}_1, \dots, \check{m}_{g} \rangle$ where
\begin{equation} \label{eq:semigroup-generators}
  \check{m}_i = \sum_{j = 1}^{i-1}\frac{(n_{j-1} - n_j)m_j}{n_{i-1}} + m_i, \quad \textrm{for}\ i = 1, \dots, g.
\end{equation}

Throughout this work we will be interested in those elements $f_i \in \mathcal{O}_{X, O}$ such that
$v_\xi(\gamma_i) = [\xi . \gamma_i]_O = \check{m}_{i}$, with $\gamma_i: f_i = 0$ for $i = 0, \dots, g$. They will be called \emph{maximal contact elements} of $\xi$. Several choices for each $f_i$ giving raise to different germs can be made; for instance, if $\xi$ is not tangent to the $y$-axis, then $f_0 = x$ and $f_1 = y + a_1x + a_2x^2+ \cdots$. In general, these elements can be explicitly computed. Namely, if the equation defining $\xi$ is in Weierstrass form, the maximal contact elements correspond to its approximate roots (see \cite{abhyankar1}, \cite{abhyankar2}). Alternatively, if one has a Puiseux series of $\xi$ as in \eqref{eq:puiseux}, then the equations of the maximal contact elements $f_i$ have Puiseux series:
\begin{equation} \label{eq:puiseux-generators}
s_i(x) = \sum_{\substack{j \in (n_0)\\ m_0 \leq j < m_1}} a_j x^{j/n} + \cdots + \sum_{\substack{j\in(n_{i-1}) \\ m_{i-1} \leq j < m_{i}}} a_j x^{j/n} + \cdots  \quad \textrm{for}\ i = 1, \dots g,
\end{equation}
where the non-explicit terms are assumed not to increase the polydromy order, \cite[\S 1.2]{casas}, $n/n_{i-1}$ of $s_i$, and either $f_0 = x$ or $f_0 = y$ depending on whether $\xi$ is tangent to the $x$-axis or the $y$-axis respectively, see \cite[\S 5.8]{casas}. Notice that the multiplicity at the origin of these maximal contact elements is $e_O(f_i) = n / n_{i-1}$ for $i = 1, \dots, g$.



\vskip 2mm

We introduce here the notion of \emph{maximal contact elements} for a proper birational morphism \( \pi : X' \longrightarrow X \). The maximal contact elements of \( \pi \) with exceptional divisor $E$ are those $f_p \in \mathcal{O}_{X, O}$ with $p \in K$ considered in Construction \ref{remark:curvettes}, such that the vertex $p$ is a dead-end of the dual graph of $E_{\textrm{exc}}$, i.e. the dual graph remains connected when the vertex is removed. In particular, a dead-end vertex will always be a free point or the origin $O$, since satellite points are always proximate to two points. A \emph{set of maximal contact elements} \( \{f_i\}_{i \in I} \) contains a unique $f_{p_i}=f_i$ for each dead-end vertex $p_i \in K$. Since these elements are determined by a finite number of valuative  conditions, the elements \( f_i \)  can always be chosen to be polynomials instead of power series. Moreover, these polynomials can be chosen in an intrinsic unique way which is valid for any $\pi $. This will be a key feature for
computational issues.

\vskip 2mm

This definition coincides with the one given for an irreducible element $f \in \mathcal{O}_{X,O}$.  The elements $f_p \in \mathcal{O}_{X, O}$ such that $p$ is a dead-end of the dual graph of the resolution of $(f)$ are exactly those with Puiseux series as in \eqref{eq:puiseux-generators}. Conversely, we can construct equations for the maximal contact elements $f_p$ with $p \in K$ of any divisor $F$ parameterized by $K$: the coordinates, as points in the projective line, $q \in E_{q'} \cong \mathbb{P}^1_{\mathbb{C}}, q \rightarrow q'$, for all $q \le p, e_q(f_p) \ne 0$ determine, and are determined by, the coefficients of a Puiseux series of any $f_p \in \mathcal{O}_{X, O}$, see \cite[\S 5.7]{casas}, which will be as in \eqref{eq:puiseux-generators}.

\begin{remark}
From any set of maximal contact elements \( \{f_i\} \) one can always recover the morphism \( \pi : X' \rightarrow X \) as the minimal resolution dominating the resolutions of \( \{f_i\} \).
\end{remark}

\begin{example} \label{ex:example1}
Consider the proper birational morphism \( \pi : X' \rightarrow X \) consisting in blowing-up four points with the configuration given by the following dual graph:

\tikzstyle{dual}=[circle, draw, fill=black!100, inner sep=0pt, minimum width=4pt]
\begin{center}
\begin{tikzpicture}[level/.style={sibling distance=120mm/#1}]
  \draw (-5,-2) node[dual] {} -- (-5,-1) node[dual] {} -- (-5,0) node[dual] {};
  \draw (-6,-2) node[dual] {} -- (-5,-2) node[dual] {} -- (-4,-2) node[dual] {};
  \draw (-5,-2.3) node {\footnotesize $p_2$};
  \draw (-6,-2.3) node {\footnotesize $O$};
  \draw (-4,-2.3) node {\footnotesize $p_1$};
  \draw (-5.3,-1) node {\footnotesize $p_4$};
  \draw (-5.3,0) node {\footnotesize $p_3$};

  \draw (-10,-2) node[dual] {} -- (-10,-1) node[dual] {} -- (-10,0) node[dual] {};
  \draw (-11,-2) node[dual] {} -- (-10,-2) node[dual] {} -- (-9,-2) node[dual] {};
  \draw (-10,-2.3) node {\footnotesize $12$};
  \draw (-11,-2.3) node {\footnotesize $4$};
  \draw (-9,-2.3) node {\footnotesize $6$};
  \draw (-10.3,-1) node {\footnotesize $26$};
  \draw (-10.3,0) node {\footnotesize $13$};
\end{tikzpicture}
\end{center}

Take, for instance, \( D = 4E_O + 6E_{p_1} + 12E_{p_2} + 13E_{p_3} + 26E_{p_4} \). The dead-end points are precisely $O, p_1, p_3$. Therefore, a set of maximal contact elements for \( \pi \) is $\{f_0, f_1, f_2\}$ with
\begin{align*}
  f_0 &= x + a_{2,0}x^2 + a_{0,2}y^2 + a_{1,1}xy + \cdots, \\
  f_1 &= y + b_2x^2 + b_3x^3 + \cdots, \\
  f_2 &= y^2 - x^3 + \sum_{3i + 2j > 6} c_{i,j} x^iy^j,
\end{align*}
and different choices of $a_{i,j}, b_i, c_{i, j} \in \mathbb{C}$ will give different sets of maximal contact elements. For instance, if all the coefficients are chosen to be zero, \( f_0 = x, f_1 = y, f_2 = y^2 - x^3 \), which are polynomials.
 \end{example}

\section{An algorithm to compute \texorpdfstring{\( H_D \)}{HD}} \label{algorithms}


Let $(X,O)$ be a germ of smooth complex surface and let \( \pi : X' \longrightarrow {X} \) be a proper birational transform.
In this section we present an algorithm which computes a set of generators for the \( \mathfrak{m} \)-primary ideal \( H_D \) for any divisor \( D \) with exceptional support in \( X' \), i.e. \( D \in \Lambda_\pi \).

\vskip 2mm

We briefly describe the main ideas behind Algorithm \ref{alg:A1}. We start with divisor \( D \in \Lambda_{\pi} \) which can be assumed to be antinef. It decomposes into simple divisors $D = \rho_{q_1} B_{q_1} + \dots + \rho_{q_r} B_{q_r}$ with all $\rho_{q_i} > 0$. For each simple divisor $B_{q_i}$, $q_i \neq O$, we compute the antinef closure of $B_{q_i} + E_{O}$ which we denote $\widehat{D}_i$. This new divisor describes a particular \emph{adjacent ideal} $H_{\widehat{D}_i}$ \emph{below} $H_{B_{q_i}}$, i.e. an ideal $H_{\widehat{D}_i} \varsubsetneq H_{B_{q_i}}$ such that $\dim H_{B_{q_i}}/H_{\widehat{D}_i} = 1$ as $\mathbb{C}$-vector space.
Next, we find, among the set of maximal contact elements of \( \pi \), an element $f \in \mathcal{O}_{X, O}$  belonging to $H_{B_{q_i}}$ but not to $H_{\widehat{D}_i}$. Now, $\widehat{D}_i$ is no longer simple but has smaller support than $B_{q_i}$ in the total transform basis. Therefore we may repeat the same procedure with $D := B_{q_j}, j < i$ until $D = B_O := \textrm{Div}(\pi^*\mathfrak{m})$.

\vskip 2mm

The first part of the algorithm can be represented by a tree where each vertex is an antinef divisor. The leaves of the tree are all $B_O=\textrm{Div}(\pi^*\mathfrak{m})$ and the root is the initial divisor \( D \). The second part of the algorithm traverses the tree bottom-up computing, in each node, the ideal associated to the divisor of that node. Using the notations from the above paragraph, given any node in the tree with divisor $D$, the ideal $H_D$ is computed multiplying the ideals in child nodes and adding the element $f$ to the resulting generators.

\vskip 2mm

Before giving a more explicit description of the algorithm, let us first state two technical results.
The first one presents some properties of adjacent ideals based on results obtained by Fern\'{a}ndez-S\'{a}nchez (see \cite{FS03}, \cite{FS05}, \cite{FS06}) in the study of sandwiched singularities and the Nash conjecture of arcs on these singularities.

\begin{proposition} \label{prop:adjacent-ideal}
Let $H_D$ be the complete ideal defined by an antinef divisor $D \in \Lambda_\pi$. Consider $\widehat{D}$ the antinef closure of $D + E_O$, obtained from $D + E_O$ by unloading on a given\footnote{$T$ is the set of points $p\in K$
that parameterize the support of $\widehat{D}-(D+E_O)$. Notice that \( T \) may be empty.} subset of  points $T \subseteq K$. Then, $H_{\widehat{D}} \varsubsetneq H_D$ are adjacent ideals if and only if $\rho_O(D) = 0$.

\vskip 2mm

Furthermore, if $H_{\widehat{D}} \varsubsetneq H_D$ are adjacent then,
\begin{enumerate}
  \item $\sum_{ p \in T} E_p$ is the connected component of $\sum_{ p \in K, \, \rho_p(D)=0} E_p$ containing $E_O$;
  \item $e_O(\widehat{D})=e_O(D)+1$, and $e_p(D) -1 \leq e_p(\widehat{D}) \leq e_p(D)$ for any $p \in K$, $p\neq O$; moreover $\rho_O(\widehat{D})>0$;
  \item if $p \in K \setminus T$ and $p$ is proximate to some point in $T$ then, $e_p(\widehat{D})=e_p(D) -1$.
\end{enumerate}
\end{proposition}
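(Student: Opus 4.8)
The plan is to work entirely in the total transform basis, where the antinef condition becomes the proximity inequalities \eqref{eq:proximity-inequality} and the unloading procedure becomes transparent. First I would analyze the divisor $D' := D + E_O$: in the total transform basis $e_O(D') = e_O(D) + 1$ and $e_p(D') = e_p(D)$ for $p \neq O$. Since $D$ is antinef, the only proximity inequality that can fail for $D'$ is at a point $q$ with $q \to O$, and only when $e_q(D) = \sum_{r \to q} e_r(D)$ held with equality for $D$, i.e. $\rho_q(D) = 0$; indeed $\rho_q(D) = e_q(D) - \sum_{r\to q} e_r(D)$ after accounting for the proximity relations, and we must track carefully how adding $E_O$ with value (strict-transform coefficient) $1$ translates into the multiplicities. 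The key observation is that unloading $D' = D+E_O$ proceeds exactly along a connected chain/component of vertices $p$ with $\rho_p(D) = 0$ emanating from $O$: this is what item (1) asserts, and I would prove it by induction on the unloading steps of Algorithm \ref{alg:unloading}, showing that at each step the set $\Theta$ of violated vertices lies in the connected component of $\{p : \rho_p(D) = 0\}$ containing $E_O$, and that the procedure terminates precisely when one exits this component.

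For the \textbf{if and only if} statement on adjacency, I would use that $\dim_{\mathbb C} H_D / H_{D+E_O}$ can be computed via the codimension formula for complete ideals (equivalently, via the intersection-theoretic formula $\dim_{\mathbb C} \mathcal O_{X,O}/H_D = \tfrac12\sum_p e_p(D)(e_p(D)+1)$, or the $D\cdot(D+B_O)$-type expressions from \cite[\S 4.5, \S 8.3]{casas}) applied to both $D$ and its antinef closure $\widehat D$. The difference $\dim H_D/H_{\widehat D}$ is then a sum of contributions from $O$ (which always contributes $+1$) and from the points of $T$ that get incremented during unloading; the total equals $1$ exactly when no vertex other than $O$ changes its multiplicity in a codimension-increasing way, and a direct computation with the unloading increments shows this happens precisely when $\rho_O(D) = 0$. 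If $\rho_O(D) > 0$ then $D + E_O$ is already antinef (no unloading needed, $T = \emptyset$) but then $e_p(D+E_O) = e_p(D)$ for all $p \neq O$ and the codimension jumps by exactly $e_O(D)+1 > 1$, so the ideals are not adjacent. If $\rho_O(D) = 0$, unloading compensates: the decrements $e_p(\widehat D) = e_p(D) - 1$ forced at the boundary points (item (3)) reduce the codimension contribution back down to a net $+1$.

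Once adjacency is established, items (2) and (3) follow from a close reading of the unloading steps. For item (2): $e_O$ only increases, by exactly $1$ at the first (and, one checks, only relevant) step, and no later step touches $O$; for the bound $e_p(D) - 1 \le e_p(\widehat D) \le e_p(D)$ one shows each vertex in $T$ has its multiplicity changed by exactly $\pm$ a small amount, and in the adjacent case ($\rho_O(D)=0$) the only changes are $+1$ at $O$ and the cascade along $T$; I would argue that because all the excesses $\rho_p(D)$ for $p \in T\setminus\{O\}$ are zero, each unloading multiplicity $n_p = \lceil \rho_p/E_p^2 \rceil$ works out so that $e_p$ moves by at most $1$. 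The inequality $\rho_O(\widehat D) > 0$ holds because after unloading the whole connected component $T$, the vertex $O$ sits strictly inside and its excess has been pushed up. For item (3): if $p \notin T$ is proximate to some $p' \in T$, then $p$ was \emph{not} unloaded, so $e_p(\widehat D) = e_p(D)$; meanwhile the proximity inequality at $p'$ for $\widehat D$ must hold, and since $e_{p'}$ was incremented while the sum $\sum_{r \to p'} e_r(\widehat D)$ includes the unchanged $e_p$, re-deriving equation \eqref{eq:proximity-inequality} at the relevant point forces the stated equality $e_p(\widehat D) = e_p(D) - 1$ — here one must be careful about which of $p$ or its neighbours lies in $T$, and chase the signs through the proximity matrix.

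The main obstacle I anticipate is the bookkeeping in the \textbf{only if} direction and in pinning down exactly which vertices get incremented versus decremented during unloading: the unloading procedure is iterative and a priori a vertex could be touched several times, so I would need a clean invariant — likely that throughout the procedure the modified part stays supported on (a growing subset of) the connected component $C_O := $ the component of $\{\rho_p(D) = 0\}$ containing $O$, and that each vertex in $C_O$ is incremented exactly once while exactly the boundary vertices (in $K\setminus T$, proximate to $T$) see their relevant neighbour-sums change. Making this precise, rather than the final codimension count, is where the real work lies; the codimension bookkeeping and items (2)--(3) are then essentially corollaries of that structural description of the unloading, together with the results of Fern\'andez-S\'anchez cited before the statement, which I would invoke for the adjacency-codimension dictionary.
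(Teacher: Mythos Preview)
Your overall strategy --- analyze the unloading of $D+E_O$ directly in the total transform basis and read off both the codimension jump and items (1)--(3) --- is different from the paper's, and it \emph{could} be made to work, but as written it rests on two concrete errors.

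\textbf{First error: the multiplicities of $D+E_O$.} You assert that $e_O(D+E_O)=e_O(D)+1$ and $e_p(D+E_O)=e_p(D)$ for $p\neq O$. The first is correct, but the second is not: by \eqref{eq:proximity} one has $e_q(D+E_O)=v_q(D)-\sum_{q\to p}v_p(D+E_O)$, and since $v_O$ has increased by $1$, every $q$ with $q\to O$ satisfies $e_q(D+E_O)=e_q(D)-1$. So already before any unloading the multiplicities at all points proximate to $O$ drop. This propagates through your whole argument: the codimension computation you sketch (``jumps by exactly $e_O(D)+1$'') is based on the false premise that only $e_O$ changes, and your treatment of item (3) becomes self-contradictory --- you first say $e_p(\widehat D)=e_p(D)$ because $p\notin T$ was ``not unloaded'', and then in the same paragraph conclude $e_p(\widehat D)=e_p(D)-1$.

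\textbf{Second error: antinefness when $\rho_O(D)>0$.} You claim that if $\rho_O(D)>0$ then $D+E_O$ is already antinef. But $\rho_p(D+E_O)=\rho_p(D)-E_O\cdot E_p$, so for any $p$ adjacent to $E_O$ in the dual graph one gets $\rho_p(D+E_O)=\rho_p(D)-1$, which is negative whenever $\rho_p(D)=0$. A small example: with $K=\{O,p_1,p_2,p_3\}$, $p_1\to O$, $p_2\to p_1$, $p_3\to O$ all free and $(e_O,e_{p_1},e_{p_2},e_{p_3})=(3,1,1,1)$, one has $\rho_O(D)=1>0$ yet $\rho_{p_1}(D+E_O)=-1$, so unloading is required. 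Thus the ``only if'' direction cannot be dispatched the way you propose.

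\textbf{How the paper proceeds instead.} The paper avoids the delicate direct bookkeeping entirely. For the adjacency criterion it enlarges $K$ by $r:=\rho_O(D)+1$ extra free points $p_1,\dots,p_r$ on $E_O$, observes that the antinef closures of $\overline{D}+E_O$ and of $\overline{D}+E_{p_1}+\cdots+E_{p_r}$ coincide, and uses the codimension formula to exhibit a chain of $r$ adjacent complete ideals between $H_D$ and $H_{\widehat D}$; hence $\dim H_D/H_{\widehat D}=\rho_O(D)+1$, giving the iff immediately. Items (1)--(3) are then obtained by invoking the structural results of Fern\'andez-S\'anchez and Alberich-Carrami\~nana--Fern\'andez-S\'anchez on adjacent ideals (references \cite{FS03,FS05,FS06,ACFS07,ACFS10} in the paper), rather than by a step-by-step unloading analysis. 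If you want to pursue your direct route, you must first correct the conversion $D\mapsto D+E_O$ in the $\overline{E}$-basis, then prove carefully the invariant you yourself flag as the main obstacle (that unloading stays inside the connected component of $\{\rho_p(D)=0\}$ containing $O$, touching each vertex exactly once); only then will the codimension count and items (2)--(3) fall out.
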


\begin{proof}
Consider the cluster $K'$ obtained from $K$ by adding $r:= \rho_O(D)+1$ free points $p_1, \ldots, p_r$ lying on $E_O$. Let $\pi' :Y' \rightarrow X$ be the composition of $\pi$ with the sequence of blow-ups of the points $p_1, \ldots, p_r$.
Denote by $\overline{G} \in \textrm{Div}(Y')$ the pullback of any $G \in \textrm{Div}(X')$.
For simplicity, denote the strict and the total transform basis by $\{E_p \}_{p\in K'}$ and $\{\overline{E_p}\}_{p\in K'}$ respectively in the lattice $\Lambda_{\pi'}$.

\vskip 2mm

Clearly, both $\overline{D}+ E_O$ and $\overline{D}+ E_{p_1}+ \cdots + E_{p_r}$ are not consistent, whereas $\overline{D}+ E_{p_1}+ \cdots + E_{p_i}$ are consistent for all $1\leq i <r$. Moreover, when applying the unloading procedure described in Algorithm \ref{alg:unloading}, we find that the antinef closures of $\overline{D}+ E_O$ and $\overline{D}+ E_{p_1}+ \cdots + E_{p_r}$ are the same, say it $\widehat{D'}$, and $e_{p_i}(\widehat{D'})=0$ for all $1 \leq i \leq r$. Indeed, the first step of the unloading procedure applied to  $\overline{D}+ E_O$ or $\overline{D}+ E_{p_1}+ \cdots + E_{p_r}$ gives the same divisor  $\overline{D}+ E_0+E_{p_1}+ \cdots + E_{p_r}$. Furthermore, $\widehat{D'}$ is the pullback of the antinef closure $\widehat{D}$ of $D + E_O$ in $\textrm{Div}(X')$ and hence they define the same complete ideal $H_{\widehat{D'}}=H_{\widehat{D}}$.

\vskip 2mm

Now, from \cite[\S 4.7]{casas}, the codimension of a complete ideal $H_G$ defined by a divisor $G \in \Lambda_\pi$ satisfies $$ \dim \mathcal{O}_{X, O}/H_{G} = \sum_{p \in K} \frac{e_p(\widetilde{G})(e_p(\widetilde{G})+1)}{2}
\leq \sum_{p \in K} \frac{e_p(G)(e_p(G)+1)}{2} \, ,
$$
where $\widetilde{G}$ is the antinef closure of $G$.
Hence, $$H_{\widehat{D}}= H_{\overline{D}+ E_{p_1}+ \cdots + E_{p_r}} \varsubsetneq H_{\overline{D}+ E_{p_1}+ \cdots + E_{p_{r-1}}} \varsubsetneq \ldots \varsubsetneq H_{\overline{D}+ E_{p_1}} \varsubsetneq H_D$$ is a chain of adjacent complete ideals, giving $\dim H_{B_{q_i}}/H_{\widehat{D}_i} = r= \rho_O(D)+1$. Therefore, $H_{\widehat{D}} \varsubsetneq H_D$ are adjacent, if and only if $\rho_O(D) = 0$.

\vskip 2mm

Finally, from \cite[2.1]{ACFS07} and \cite[4.6]{FS03} claim $i)$ follows. Claim $ii)$ and $iii)$ are consequences of \cite[4.2]{FS05} and \cite[2.2]{FS06} (see also \cite[3.1]{ACFS10}).
\end{proof}

\begin{remark}
Although there may be multiple adjacent ideals \( H_{\widehat{D}} \) to a fixed ideal \( H_D \), the adjacent ideal considered in Proposition \ref{prop:adjacent-ideal} is unique with the property that \( e_O(\widehat{D}) = e_O(D) + 1 \), so we will refer to it as \emph{the} adjacent ideal to \( H_D \). This property turns out to be crucial for the finiteness of the algorithm.
\end{remark}

\begin{remark}
Notice that if \( D \) is a simple divisor such that \( H_D \neq \mathfrak{m} \), then \( \rho_O(D) = 0 \) and \( H_{\widehat{D}} \) is always adjacent. Furthermore, the unloading step is always required, i.e. \( T \) is always non empty in this case.
\end{remark}

\begin{lemma} \label{lemma:nakayama}
Let $H_D$ be the complete ideal defined by an antinef divisor $D \in \Lambda_\pi$. The divisor $D + \overline{E}_O$ is antinef and $H_{D + \overline{E}_O} = \mathfrak{m} H_D$.
\end{lemma}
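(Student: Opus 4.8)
The plan is to derive both assertions from Zariski's factorization theorem (Theorem~\ref{thm:zariski}), once $\overline{E}_O$ is recognized as the branch basis divisor $B_O$ and one recalls that $H_{B_O}=\mathfrak m$.

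First I would identify $\overline{E}_O = B_O = \textrm{Div}(\pi^*\mathfrak m)$. Indeed, a curvette $f_O$ at $O$ in the sense of Construction~\ref{remark:curvettes} is a smooth branch through $O$ whose strict transform meets $E_O$ transversally at a point that is a smooth point of $E$; such a point is therefore never blown up by $\pi$, so the exceptional part of $\pi^*f_O$ is $\overline{E}_O$ and hence $B_O = \textrm{Div}(\pi^*f_O)_{\rm exc} = \overline{E}_O$. From the defining property $B_p\cdot E_q = -\delta_{p,q}$ of the branch basis we then get $-\overline{E}_O\cdot E_p = \delta_{O,p}\ge 0$, so $\overline{E}_O$ is antinef, and $H_{B_O}=\overline{\mathfrak m}=\mathfrak m$ is its associated simple complete ideal (Subsection~\ref{complete-antinef}).

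Granting this, the antinefness of $D+\overline{E}_O$ is immediate: writing $D=\sum_{p\in K}\rho_p(D)B_p$ with all $\rho_p(D)=-D\cdot E_p\ge 0$ (this is what antinef means), one has $-(D+\overline{E}_O)\cdot E_p = \rho_p(D)+\delta_{O,p}\ge 0$ for every $p\in K$. For the equality of ideals I would apply Theorem~\ref{thm:zariski} to the antinef divisor $D+\overline{E}_O = \sum_{p\in K}(\rho_p(D)+\delta_{O,p})B_p$ and factor off the $B_O$ term:
\begin{equation*}
H_{D+\overline{E}_O} = \prod_{p\in K} H_{B_p}^{\rho_p(D)+\delta_{O,p}} = H_{B_O}\cdot\prod_{p\in K}H_{B_p}^{\rho_p(D)} = \mathfrak m\, H_D ,
\end{equation*}
where the last two equalities use $H_{B_O}=\mathfrak m$ and the factorization of $H_D$ itself.

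I do not anticipate a real obstacle, since every ingredient is already available in Section~\ref{prelim}; the only point requiring a little care is the identification $\overline{E}_O=B_O$ with $H_{B_O}=\mathfrak m$. Should one wish to avoid Theorem~\ref{thm:zariski} entirely, the inclusion $\mathfrak m H_D\subseteq H_{D+\overline{E}_O}$ drops out of the valuative description~\eqref{eq:ideal} together with $\mathfrak m\subseteq H_{\overline{E}_O}$, whereas the reverse inclusion $H_{D+\overline{E}_O}\subseteq \mathfrak m H_D$ is precisely the nontrivial content of Zariski's product theorem and is where all the difficulty would lie.
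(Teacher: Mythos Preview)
Your argument is correct and follows essentially the same route as the paper: identify $\overline{E}_O=B_O$ with $H_{B_O}=\mathfrak m$, observe antinefness, and then use multiplicativity of $D\mapsto H_D$. The only cosmetic difference is that the paper invokes this multiplicativity directly via the semigroup isomorphism between antinef divisors and complete ideals stated in Subsection~\ref{complete-antinef}, whereas you unpack it through the full Zariski factorization of Theorem~\ref{thm:zariski}; both amount to $H_{D+\overline{E}_O}=H_D\cdot H_{\overline{E}_O}$.
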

\begin{proof}
Clearly $\mathfrak{m} = \{f \in \mathcal{O}_{X, O}\ |\ e_O(f) \ge 1\}$. Thus, $\mathfrak{m} = H_{\overline{E}_O}$, and since $\overline{E}_O = B_O$ it is antinef and the result follows by the correspondence between antinef divisor and complete ideals in section \ref{complete-antinef}.
\end{proof}

\vskip 2mm


\vskip 2mm

\begin{algorithm} {(Generators for \( H_D \))} \label{alg:A1}

\vskip 2mm


\noindent {\tt Input:} A proper birational morphism \( \pi : X' \rightarrow X \) and an antinef divisor \( D \in \Lambda_{\pi} \).

\vskip 2mm


\noindent {\tt Output:} Generators for the ideal \( H_D \).

\vskip 2mm

\begin{enumerate}[label*=\arabic*.]


  \vskip 2mm

  \item[1.] Compute and fix a set of maximal contact elements $\{f_i\}$ with $i \in I$ of \( \pi \).

  \vskip 2mm

  \item[2.] Set $D^{(0)} := D$ and proceed from step $(0.1)$.

\vskip 2mm

  \hspace{-11mm} Step $(i)$:

\vskip 2mm

\begin{enumerate}

  \item[$i$.1] Decompose $D^{(i)}$ into $d_i := \#\{p \in K\ |\ \rho_p(D^{(i)}) > 0 \}$ simple divisors.

  \vskip 2mm

  \item[$i$.2] For each $j = 1, \dots, d_i$, consider $q_j \in \{p \in K\ |\ \rho_p(D^{(i)}) > 0\}$ and assume $B_{q_j} = \sum_{p \in K} e_p \overline{E}_p$.

  \vskip 2mm

  \begin{enumerate}

  \item[$i.j$.1]{\textbf{Stop at the maximal ideal:}}

  \vskip 2mm

  If $B_{q_j} = B_O := \textrm{\emph{Div}}(\pi^*\mathfrak{m})$, then set $H_{B_{q_j}} = (f_{i_0}, f_{i_1})$ for $i_0, i_1 \in I$ such that they are smooth and transverse at $O$, then stop. Otherwise, proceed from $i.j$.2.

\vskip 2mm

  \item[$i.j$.2]{\textbf{Compute the adjacent ideal to $H_{B_{q_j}}$:}}

\vskip 1mm

  Perform unloading on the divisor $B_{q_j} + E_O$ to get its antinef closure $\widehat{D}_j$.

\vskip 2mm

  \item[$i.j$.3]{\textbf{Select a maximal contact element in $H_{B_{q_j}} \setminus H_{\widehat{D}_j}:$}}

\vskip 2mm

  Let $p \in K$ be the last free point such that $e_p \neq 0$. Take $\tau_j \in I$ such that $e_p(f_{\tau_j}) = 1$ and $e_O(f_{\tau_j}) \le e_O$. Define the integer $n_{j} := e_O/e_O(f_{\tau_j})$.

\vskip 2mm

  \item[$i.j$.4]{\textbf{Recursive step}}:

\vskip 2mm

  Assume that $H_{\widehat{D}_j}$ has been computed after performing step $(i+1)$ with $D^{(i+1)} := \widehat{D}_j$.

\vskip 2mm

  \item[$i.j$.5]{\textbf{Set}}:
    \vskip -0.7cm
\[
  H_{B_{q_j}} = \left(f^{n_j}_{\tau_j}\right) + H_{\widehat{D}_j}.
\]
\end{enumerate}

\vskip 2mm

  \item[$i$.3]{\textbf{Apply Zariski's factorization theorem:}}

\vskip 2mm

  Compute the product $H_{D^{(i)}} = \prod_{j = 1}^{d_i} H_{B_{q_j}}^{\rho_{q_j}}$, giving generators $h_1, \dots, h_{s_i}$.

\vskip 2mm

  \item[$i$.4]{\textbf{Set, using Nakayama's lemma:}}
    \[
      \hspace{2cm}H_{D^{(i)}} = \Big(h_k \ \big|\ \pi^*h_k \not\in \mathcal{O}_{X'}\big(-D^{(i)} - \overline{E}_O\big),\ k = 1, \dots, s_i\Big)\mathcal{O}_{X, O}.
    \]

\end{enumerate}

\item[3.] \textbf{Return:} $H_D = H_{D^{(0)}}.$

\end{enumerate}

\end{algorithm}

\vskip 2mm


\begin{remark} \label{remark:nakayama}
In order to clarify some steps of the algorithm we point out the following:

\vskip 2mm
\begin{itemize}

\item[$\cdot$] At step 1 of Algorithm \ref{alg:A1} a set of maximal contact elements \( \{f_i\}_{i\in I} \) of \( \pi \) is fixed. The specific choice of the germs $f_i=0$ nor of the equations $f_i$ do not affect the output of the algorithm: the monomial expression remains the same for whatever choice, since the algorithm only uses the information of the equisingularity types of the maximal contact elements. However, as mentioned in Section \ref{max_contact}, the set \( \{f_i\} \) can be chosen in an intrinsic unique way, which is convenient for computational issues.


\vskip 2mm

\item[$\cdot$] At steps ($i.j.1$) and ($i.j.3$) of Algorithm \ref{alg:A1}  we have to choose maximal contact elements.  These choices are not necessary unique as several maximal contact elements may fulfill the required conditions. However, by simply keeping record of the dead-ends indexing these chosen maximal contact elements, our algorithm is able to produce the same output whatever the input equations of the ideal are, and whenever it is used with two different input ideals which have the same integral closure.
    
\vskip 2mm

\item[$\cdot$] Since the sheaf ideals $\mathcal{O}_{X'}(-D)$, with $D \in \textrm{Div}(X')$, are defined by valuations, testing whether the pullback of an element $f$  belongs to $\mathcal{O}_{X'}(-D)$ or not is only a matter of comparing the values $v_p(\textrm{Div}(\pi^*f))$ and $v_p(D)$ for all $p \in K$.

\vskip 2mm

\item[$\cdot$] It is clear from Nakayama's lemma and Lemma \ref{lemma:nakayama} that a set of elements of $\mathcal{O}_{X, O}$ is a system of generators of $H_{D}$ if and only if its classes modulo $H_{D + \overline{E}_O}$ are a system of generators of $H_{D}/H_{D + \overline{E}_O}$ as $\mathbb{C}$-vector space. Equivalently, any element of $H_{D + \overline{E}_O}$ is redundant in a system of generators of $H_D$.

\end{itemize}
\end{remark}

\begin{example} \label{ex:example}

We will compute \( H_D \) for the divisor \( D \) and the morphism \( \pi \) from Example \ref{ex:example1}. Let us fix the set of maximal contact elements \( f_0 = x, f_1 = y, f_2 = y^2 - x^3 \). The steps of Algorithm \ref{alg:A1} applied to \( D = 4E_O + 6E_{p_1} + 12E_{p_2} + 13E_{p_3} + 26E_{p_4} \) will be illustrated by means of the tree-shaped graph in Figure \ref{fig:example}.


\begin{figure}
\tikzstyle{dual}=[circle, draw, fill=black!100, inner sep=0pt, minimum width=4pt]

\begin{center}
\begin{tikzpicture}[level/.style={sibling distance=120mm/#1}]
  \tikzstyle{level 1}=[level distance=45mm]
  \tikzstyle{level 2}=[level distance=45mm]
  \tikzstyle{level 3}=[level distance=45mm]
\node [] (z){
    \begin{tikzpicture}
      \draw (0,0) node[dual] {} -- (0,1) node[dual] {} -- (0,2) node[dual] {};
      \draw (-1,0) node[dual] {} -- (0,0) node[dual] {} -- (1,0) node[dual] {};
      \draw (0,-0.3) node {\footnotesize $12$};
      \draw (-1,-0.3) node {\footnotesize $4$};
      \draw (1,-0.3) node {\footnotesize $6$};
      \draw (0.3,1) node {\footnotesize $26$};
      \draw (0.3,2) node {\footnotesize $13$};
    \end{tikzpicture}
}
child {node [] (y) {
    \begin{tikzpicture}
      \draw (0,0) node[dual] {} -- (0,1) node[dual] {} -- (0,2) node[dual] {};
      \draw (-1,0) node[dual] {} -- (0,0) node[dual] {} -- (1,0) node[dual] {};
      \draw (0,-0.3) node {\footnotesize $13$};
      \draw (-1,-0.3) node {\footnotesize $5$};
      \draw (1,-0.3) node {\footnotesize $7$};
      \draw (0.3,1) node {\footnotesize $26$};
      \draw (0.3,2) node {\footnotesize $13$};
    \end{tikzpicture}
}
  child {node [] (a) {
    \begin{tikzpicture}
      \draw (0,0) node[dual] {} -- (0,1) node[dual] {} -- (0,2) node[dual] {};
      \draw (-1,0) node[dual] {} -- (0,0) node[dual] {} -- (1,0) node[dual] {};
      \draw (0,-0.3) node {\footnotesize $2$};
      \draw (-1,-0.3) node {\footnotesize $1$};
      \draw (1,-0.3) node {\footnotesize $1$};
      \draw (0.3,1) node {\footnotesize $4$};
      \draw (0.3,2) node {\footnotesize $2$};
    \end{tikzpicture}
  }
  edge from parent[draw=none] node[left] {$2\ \ $}}
  child {node [] (l) {
    \begin{tikzpicture}
      \draw (0,0) node[dual] {} -- (0,1) node[dual] {} -- (0,2) node[dual] {};
      \draw (-1,0) node[dual] {} -- (0,0) node[dual] {} -- (1,0) node[dual] {};
      \draw (0,-0.3) node {\footnotesize $3$};
      \draw (-1,-0.3) node {\footnotesize $1$};
      \draw (1,-0.3) node {\footnotesize $2$};
      \draw (0.3,1) node {\footnotesize $6$};
      \draw (0.3,2) node {\footnotesize $3$};
    \end{tikzpicture}
  }
    child {node [] (k) {
      \begin{tikzpicture}
        \draw (0,0) node[dual] {} -- (0,1) node[dual] {} -- (0,2) node[dual] {};
        \draw (-1,0) node[dual] {} -- (0,0) node[dual] {} -- (1,0) node[dual] {};
        \draw (0,-0.3) node {\footnotesize $4$};
        \draw (-1,-0.3) node {\footnotesize $2$};
        \draw (1,-0.3) node {\footnotesize $2$};
        \draw (0.3,1) node {\footnotesize $8$};
        \draw (0.3,2) node {\footnotesize $4$};
      \end{tikzpicture}
    }
    child {node [] (kk) {
      \begin{tikzpicture}
        \draw (0,0) node[dual] {} -- (0,1) node[dual] {} -- (0,2) node[dual] {};
        \draw (-1,0) node[dual] {} -- (0,0) node[dual] {} -- (1,0) node[dual] {};
        \draw (0,-0.3) node {\footnotesize $2$};
        \draw (-1,-0.3) node {\footnotesize $1$};
        \draw (1,-0.3) node {\footnotesize $1$};
        \draw (0.3,1) node {\footnotesize $4$};
        \draw (0.3,2) node {\footnotesize $2$};
      \end{tikzpicture}
    } edge from parent[draw=none] node[left] {$2$}
  } edge from parent[draw=none] node[left] {$y$}
} edge from parent[draw=none] node[left] {$1$}
  } child {node [] (j) {
      \begin{tikzpicture}
        \draw (0,0) node[dual] {} -- (0,1) node[dual] {} -- (0,2) node[dual] {};
        \draw (-1,0) node[dual] {} -- (0,0) node[dual] {} -- (1,0) node[dual] {};
        \draw (0,-0.3) node {\footnotesize $6$};
        \draw (-1,-0.3) node {\footnotesize $2$};
        \draw (1,-0.3) node {\footnotesize $3$};
        \draw (0.3,1) node {\footnotesize $12$};
        \draw (0.3,2) node {\footnotesize $6$};
      \end{tikzpicture}
  }
    child {node [] (r) {
      \begin{tikzpicture}
        \draw (0,0) node[dual] {} -- (0,1) node[dual] {} -- (0,2) node[dual] {};
        \draw (-1,0) node[dual] {} -- (0,0) node[dual] {} -- (1,0) node[dual] {};
        \draw (0,-0.3) node {\footnotesize $6$};
        \draw (-1,-0.3) node {\footnotesize $3$};
        \draw (1,-0.3) node {\footnotesize $3$};
        \draw (0.3,1) node {\footnotesize $12$};
        \draw (0.3,2) node {\footnotesize $6$};
      \end{tikzpicture}
    }
    child {node [] (s) {
      \begin{tikzpicture}
        \draw (0,0) node[dual] {} -- (0,1) node[dual] {} -- (0,2) node[dual] {};
        \draw (-1,0) node[dual] {} -- (0,0) node[dual] {} -- (1,0) node[dual] {};
        \draw (0,-0.3) node {\footnotesize $2$};
        \draw (-1,-0.3) node {\footnotesize $1$};
        \draw (1,-0.3) node {\footnotesize $1$};
        \draw (0.3,1) node {\footnotesize $4$};
        \draw (0.3,2) node {\footnotesize $2$};
      \end{tikzpicture}
    } edge from parent[draw=none] node[left] {$3$}
  } edge from parent[draw=none] node[left] {$y^2$}
} edge from parent[draw=none] node[right] {$\ \ 1$}
} edge from parent[draw=none] node[left] {$(y^2 - x^3)^2$}};
\draw[->,dashed] (z) -- (y);
\draw[->,dashed] (z) -- (y);
\draw[->,dashed] (j) -- (r);
\draw[->,dashed] (j) -- (r);
\draw[->,dashed] (l) -- (k);
\draw[->,dashed] (l) -- (k);
\draw[->] (y) -- (l);
\draw[->] (y) -- (a);
\draw[->] (y) -- (j);
\draw[->] (r) -- (s);
\draw[->] (k) -- (kk);

\draw (-5,-7) node {$B_O$};
\draw (1,-7) node {$B_{p_1}$};
\draw (5,-7) node {$B_{p_2}$};
\draw (2,-3.5) node {$\widehat{D}^{(0)}_1 =: D^{(1)}$};
\draw (2.5,0) node {$D =: D^{(0)} = B_{p_4}$};

\draw (-1.5, -13) node {$\widehat{D}^{(1)}_1 =: D^{(2)}_1$};
\draw (4.5, -13) node {$\widehat{D}^{(1)}_2 =: D^{(2)}_2$};
\draw (-1, -17.5) node {$B_O$};
\draw (5, -17.5) node {$B_O$};

  \draw (-5,-2) node[dual] {} -- (-5,-1) node[dual] {} -- (-5,0) node[dual] {};
  \draw (-6,-2) node[dual] {} -- (-5,-2) node[dual] {} -- (-4,-2) node[dual] {};
  \draw (-5,-2.3) node {\footnotesize $p_2$};
  \draw (-6,-2.3) node {\footnotesize $O$};
  \draw (-4,-2.3) node {\footnotesize $p_1$};
  \draw (-5.3,-1) node {\footnotesize $p_4$};
  \draw (-5.3,0) node {\footnotesize $p_3$};

\end{tikzpicture}
\end{center}
\caption{Tree of divisors from Algorithm \ref{alg:A1} in Example \ref{ex:example}.} \label{fig:example}
\end{figure}

\vskip 2mm

Each vertex of the tree contains an antinef divisor. In this example, we use dual-graphs to represent them. The root node contains the initial divisor \( D \). Dashed arrows connect simple divisors $B_{q_j}$ with its corresponding adjacent $\widehat{D}_j$ from step ($i.j.2$). The maximal contact elements from step ($i.j.3$) that belong to $H_{B_{q_j}}$ but not to $H_{\widehat{D}_j}$ are represented next to dashed arrows. Solid arrows connect $\widehat{D}_j =: D^{(i+1)}$ with each of its irreducible components $B_p$, with $p \in K$. Finally, the weight $\rho^{(i)}_p$ of each divisor $B_p, p \in K,$ in $\widehat{D}^{(i)}$ is written next to the solid arrows.

\vskip 2mm

We have added some extra indices to the divisors appearing in the algorithm to highlight at which step we encounter them. Hopefully it does not create any confusion since its meaning should be clear from the context. The generators of the ideals associated to the divisors in each intermediate step are then:

\begin{itemize}
  \vskip 2mm
  \item $H_{B_O} = \mathfrak{m} = (x, y)$.
  \vskip 2mm
  \item $H_{B_{p_1}} = (y) + H_{D^{(2)}_1} = (y) + \mathfrak{m}^2 = (y, x^2, xy, y^2).$
  \vskip 2mm
  \item $H_{B_{p_2}} = (y^2) + H_{D^{(2)}_2} = (y^2) + \mathfrak{m}^3 = (y^2, x^3, x^2y, xy^2, y^3).$
  \vskip 2mm
  \item $H_{D^{(1)}} = B_{O}^2 \cdot B_{p_1} \cdot B_{p_2} = (x, y)^2 \cdot (y, x^2, xy, y^2) \cdot (y^2, x^3, x^2y, xy^2, y^3)$
  \vskip 2mm
  \hspace{0.53cm} $ = (x^7, \cancel{x^6y}, \cancel{x^5y^2}, \cancel{x^4y^3}, \dots, x^5y, x^4y^2, \dots, \cancel{x^3y^3}, \cancel{x^2y^4}, x^2y^3, xy^4, y^5)$
  \vskip 2mm
  \hspace{0.54cm} $ = (x^7, x^5y, x^4y^2, x^2y^3, xy^4, y^5)$.
  \vskip 2mm
  \item $B_{p_4} = \big((y^2 - x^3)^2\big) + H_{D^{(1)}} = \big((y^2 - x^3)^2, x^7, x^5y, x^4y^2, x^2y^3, xy^4, y^5\big)$.
  \vskip 2mm
  \item $H_{D} := H_{D^{(0)}} = B_{p_4} = \big((y^2 - x^3)^2, x^7, x^5y, x^4y^2, x^2y^3, xy^4, \cancel{y^5}\big)$.
\end{itemize}

\vskip 1.5mm

The crossed out elements are those that are redundant by step ($i.4$) and Remark \ref{remark:nakayama}. Observe that, although many crossed out elements are actually multiple of other elements, step ($i.4$) and Remark \ref{remark:nakayama} allows us to remove $y^5$ which is not multiple of any other element.

\begin{remark}
As an outcome of the algorithm, we see that \( H_D \) admits the following monomial expression
\[
H_D = \big(f_2^2, f_0^7, f_0^5f_1, f_0^4f_1^2, f_0^2f_1^3, f_0f_1^4\big),
\]
in the set of maximal contact elements $f_0=x, f_1=y, f_2=y^2-x^3$ associated to $\pi$ that we fixed in the beginning. It is worth remarking that we would get this same monomial expression for any other set of maximal contact elements chosen in the beginning.

\vskip 2mm

However, we might get a different monomial expression depending on the maximal contact elements (or powers of)  that we choose in step ($i.j.3$) of Algorithm \ref{alg:A1}. In this example, when choosing an element in $H_{B_{p_3}}$ that does not belong to $H_{\widehat{D}^{(1)}_2}$ we took $f_1^2=y^2$, but we could also had chosen $f_2=y^2 - x^3$. In this later case the final system of generators is
\[
H_D = \left( (y^2 - x^3)^2, x^2y(y^2 - x^3), xy^2(y^2 - x^3), x^7, x^5y, x^4(y^2 - x^3), x^4y^2 \right),
\]
so we get the monomial expression
\[
  H_D = \big(f_2^2, f_0^2f_1f_2, f_0f_1^2f_2, f_0^7, f_0^5f_1, f_0^4f_2, f_0^4f_1^2\big).
\]
\end{remark}
 \end{example}

\section{Correctness of the algorithm} \label{proofs}

In this section we will prove that Algorithm \ref{alg:A1} developed in Section \ref{algorithms} is correct. First, we need to check that it ends after a finite number of steps. The key point is to prove that the divisor $\widehat{D}_j$ defining the adjacent ideal to the simple ideal $H_{B_{q_j}}$ has smaller support in the total transform basis than $B_{q_j}$.

\begin{lemma} \label{lemma:smaller-support}
Using the notations in Algorithm \ref{alg:A1}, assume that $B_{q_j}$ is a simple divisor different from $B_O =  \emph{\textrm{Div}}(\pi^*\mathfrak{m})$. Let $\widehat{D}_j$ be the antinef closure of $B_{q_j} + E_O$ computed in step ($i.j.2$). Then, $\widehat{D}_j$ has smaller support than $B_{q_j}$ in the total transform basis. 
That is,
$$|\emph{\textrm{Supp}}_{\overline{E}}(B_{q_j})| > |\emph{\textrm{Supp}}_{\overline{E}}(\widehat{D}_j)|.$$
\end{lemma}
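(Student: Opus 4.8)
The goal is to show that $\mathrm{Supp}_{\overline E}(\widehat D_j)$ is strictly smaller than $\mathrm{Supp}_{\overline E}(B_{q_j})$. Recall that $B_{q_j} = \sum_p e_p(B_{q_j}) \overline E_p$, and since $f_{q_j}$ is irreducible (Construction \ref{remark:curvettes}), the set of points $p$ with $e_p(B_{q_j}) \neq 0$ is a totally ordered chain $O = p^{(0)} < p^{(1)} < \cdots < p^{(\ell)} = q_j$. So $|\mathrm{Supp}_{\overline E}(B_{q_j})| = \ell + 1$. The plan is to produce a point $p$ in this chain (in fact the point $q_j$ itself, or more precisely a point at the ``top'' of the chain) for which $e_p(\widehat D_j) = 0$, while simultaneously checking via Proposition \ref{prop:adjacent-ideal}(ii) that no \emph{new} points enter the support, i.e. that $\mathrm{Supp}_{\overline E}(\widehat D_j) \subseteq \mathrm{Supp}_{\overline E}(B_{q_j})$.

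First I would establish the inclusion $\mathrm{Supp}_{\overline E}(\widehat D_j) \subseteq \mathrm{Supp}_{\overline E}(B_{q_j}) \cup \{O\} = \mathrm{Supp}_{\overline E}(B_{q_j})$ (the point $O$ is already in the chain). By Proposition \ref{prop:adjacent-ideal}(ii), $e_p(\widehat D_j) \le e_p(B_{q_j})$ for all $p \neq O$, so $e_p(B_{q_j}) = 0$ forces $e_p(\widehat D_j) = 0$; and $e_O(\widehat D_j) = e_O(B_{q_j}) + 1 \neq 0$. Hence the support of $\widehat D_j$ is contained in the support of $B_{q_j}$, and it suffices to exhibit one point in $\mathrm{Supp}_{\overline E}(B_{q_j})$ that drops out. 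Here I would use the structure of the unloading set $T$: since $B_{q_j}$ is simple and $\neq B_O$ we have $\rho_O(B_{q_j}) = 0$ (it is a dead-end configuration with only the excess at $q_j$ nonzero), so $H_{\widehat D_j} \subsetneq H_{B_{q_j}}$ is adjacent and, as noted in the second remark after Proposition \ref{prop:adjacent-ideal}, $T$ is nonempty. By Proposition \ref{prop:adjacent-ideal}(i), $\sum_{p \in T} E_p$ is the connected component of $\sum_{\rho_p(B_{q_j}) = 0} E_p$ containing $E_O$. Since the only point with positive excess is $q_j$, the divisor $\sum_{\rho_p(B_{q_j}) = 0} E_p$ is everything except $E_{q_j}$; I would argue that removing $E_{q_j}$ from the connected dual graph of $B_{q_j}$ (which, being a curvette configuration, is a chain with $q_j$ at one dead-end) leaves a connected graph containing $E_O$, so $T = K' \setminus \{q_j\}$ where $K'$ indexes the support — in particular $q_j \notin T$.

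Then I would apply Proposition \ref{prop:adjacent-ideal}(iii): the point $q_j$ lies in $\mathrm{Supp}_{\overline E}(B_{q_j})$, is not in $T$, and — being the last point of the chain — is proximate to a point of $T$ (namely its immediate predecessor in the chain, which lies in $T$). Therefore $e_{q_j}(\widehat D_j) = e_{q_j}(B_{q_j}) - 1$. Now $e_{q_j}(B_{q_j}) = -B_{q_j}\cdot E_{q_j}$ equals the excess $\rho_{q_j}(B_{q_j})$, which for a simple divisor equals $1$ — this is the standard fact that a simple antinef divisor $B_{q_j}$ has a single unit excess at $q_j$ and multiplicity $e_{q_j}(B_{q_j}) = 1$ at its last point (one can also see $e_{q_j}(B_{q_j}) = 1$ directly: $f_{q_j}$ is a curvette, its strict transform meets $E_{q_j}$ transversely at one point, so its last multiplicity is $1$). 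Hence $e_{q_j}(\widehat D_j) = 0$, so $q_j \notin \mathrm{Supp}_{\overline E}(\widehat D_j)$, giving the strict inequality $|\mathrm{Supp}_{\overline E}(\widehat D_j)| \le |\mathrm{Supp}_{\overline E}(B_{q_j})| - 1$.

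The main obstacle I anticipate is the bookkeeping around the unloading set $T$ and the connectivity claim: one must be careful that the dual graph of the \emph{support} of $B_{q_j}$ in the total transform basis is genuinely a chain ending at $q_j$ (this follows from $f_{q_j}$ being a single curvette branch and the totally-ordered-ness noted after Construction \ref{remark:curvettes}), and that deleting the endpoint $q_j$ keeps $O$ in the surviving connected component — which is immediate for a chain but deserves an explicit sentence. A secondary subtlety is justifying $e_{q_j}(B_{q_j}) = 1$; the cleanest route is the transversality in Construction \ref{remark:curvettes}, which says the strict transform of $f_{q_j}$ meets $E_{q_j}$ transversely at a smooth point of $E$, forcing the last multiplicity to be $1$. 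Everything else is a direct application of parts (i)–(iii) of Proposition \ref{prop:adjacent-ideal}.
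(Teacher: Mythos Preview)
Your overall strategy matches the paper's: use Proposition~\ref{prop:adjacent-ideal}(ii) to confine the support, and part~(iii) to force $e_{q_j}(\widehat D_j)=e_{q_j}(B_{q_j})-1=0$. The support-inclusion step is correct and in fact more explicit than what the paper writes.

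The gap is in verifying the hypothesis of part~(iii), namely that $q_j$ is proximate to some point of $T$. You assert that the dual graph of $\mathrm{Supp}_{\overline E}(B_{q_j})$ is a linear chain with $q_j$ at a dead-end, so that removing $E_{q_j}$ keeps it connected and $T$ contains every other point of the support --- in particular the immediate predecessor $p^{(\ell-1)}$. But total ordering of the set $\{p: e_p(B_{q_j})\neq 0\}$ under $\le$ does \emph{not} make the dual graph a chain. Take $q_j=p_4$ in Example~\ref{ex:example1}: the support $\{O,p_1,p_2,p_3,p_4\}$ is totally ordered ($O<p_1<p_2<p_3<p_4$), yet the dual graph is T-shaped with $E_{p_4}$ of degree~$2$. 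Removing $E_{p_4}$ leaves the two components $\{E_O,E_{p_1},E_{p_2}\}$ and $\{E_{p_3}\}$, so $T=\{O,p_1,p_2\}$ and the immediate predecessor $p_3$ of $p_4$ is \emph{not} in $T$. What rescues the conclusion is that a satellite $q_j$ is proximate to \emph{two} points, and the other one (here $p_2$) does lie in $T$. This is precisely the free/satellite case split the paper makes: in either case $q_j$ is proximate to the point $p\in T$ whose $E_p$ cuts $E_{q_j}$. Replace the ``dual-graph-is-a-chain'' claim by this dichotomy and your argument goes through.
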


\begin{proof}
Since $B_{q_j} \ne B_O$, the excess of $B_{q_j}$ at $O$ is $\rho_O(B_{q_j}) = 0$. Then, according to Proposition \ref{prop:adjacent-ideal}, the ideal defined by $B_{q_j} + E_O$ is an adjacent ideal below $H_{B_{q_j}}$. Let $T \subseteq K$ be the points on which unloading is performed to obtain the antinef closure $\widehat{D}_j$ from $B_{q_j} + E_O$. By Proposition \ref{prop:adjacent-ideal}, $T$ are the points $p \in K$ whose associated exceptional divisor $E_p$ belongs to the same connected component as $E_O$ in $\sum_{O \le p < q_j} E_p$. Observe that $\sum_{0 \le p < q_j} E_p$ has either one or two components, according if $q_j$ is either free or satellite. In both cases, $q_j$ is proximate to the point $p \in T$ whose exceptional divisor cuts $E_{q_j}$, i.e. $E_p \cdot E_{q_j} = 1$. Hence, invoking Proposition \ref{prop:adjacent-ideal} again, the multiplicity at $q_j$ of $\widehat{D}_j$, after performing unloading on $B_{q_j} + E_O$, decreases by one. Since $B_{q_j}$ is simple, the multiplicity of $B_{q_
j}$ at $q_j$ is
one. Hence, the multiplicity of $\widehat{D}_j$ at $q_j$ is zero, giving the desired result.
\end{proof}

In the next proposition we prove that Algorithm \ref{alg:A1} ends after a finite number of steps. To emphasize the dependence of the divisors on a specific step $(i)$ of the algorithm we will use the notation $B^{(i)}_{q_j}$ and $\widehat{D}^{(i)}_j$.

\begin{proposition} \label{prop:finiteness}
Algorithm \ref{alg:A1} ends after a finite number of steps.
\end{proposition}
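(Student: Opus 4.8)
The plan is to show that the tree of divisors produced by Algorithm \ref{alg:A1} is finite by exhibiting a well-founded quantity that strictly decreases along every edge of the tree. The natural candidate is the size of the support in the total transform basis, $|\textrm{Supp}_{\overline{E}}(\cdot)|$, since this is exactly the invariant that Lemma \ref{lemma:smaller-support} was designed to control. First I would observe that the recursion has two kinds of transitions: the passage from a divisor $D^{(i)}$ to each of the simple divisors $B_{q_j}$ occurring in its Zariski decomposition (step $i.2$), and the passage from a simple divisor $B_{q_j}$ to the adjacent divisor $\widehat{D}_j = D^{(i+1)}$ (step $i.j.2$). For the first kind, each $B_{q_j}$ is a single branch-basis divisor, so $\textrm{Supp}_{\overline{E}}(B_{q_j}) \subseteq \textrm{Supp}_{\overline{E}}(D^{(i)})$; in particular the support does not grow, and since the $B_{q_j}$ are proper summands the chain of distinct divisors cannot stall at this kind of step.

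Second I would invoke Lemma \ref{lemma:smaller-support} directly for the second kind of transition: whenever $B_{q_j} \ne B_O$, the adjacent divisor $\widehat{D}_j$ satisfies $|\textrm{Supp}_{\overline{E}}(\widehat{D}_j)| < |\textrm{Supp}_{\overline{E}}(B_{q_j})|$. Combining the two observations, along any root-to-leaf path in the tree the quantity $|\textrm{Supp}_{\overline{E}}(\cdot)|$ is non-increasing and strictly decreases at every $\widehat{D}_j$-step, hence after finitely many such steps it must reach its minimal possible value. That minimal value corresponds to divisors whose total-transform support is a single component, i.e. $B_O = \textrm{Div}(\pi^*\mathfrak{m})$ up to multiplicity, at which point step $i.j.1$ terminates the branch without recursing. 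One should also note that between consecutive $\widehat{D}_j$-steps there is only a bounded amount of branching — namely the decomposition of $D^{(i+1)}$ into at most $|K|$ simple divisors — so the tree has bounded branching at each node and finite depth, whence by König's lemma (or simply because a finitely branching tree with no infinite path is finite) the whole tree is finite.

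Finally I would conclude that, the tree being finite, the bottom-up traversal in part 2 of the algorithm — steps $i.3$ (Zariski's factorization, Theorem \ref{thm:zariski}) and $i.4$ (the Nakayama reduction, Lemma \ref{lemma:nakayama}) — terminates, and hence so does the algorithm. The only delicate point in writing this carefully is making sure the strict-decrease argument is applied to the right pairs of nodes: the support does \emph{not} strictly decrease when we merely split $D^{(i)}$ into its simple components (it can stay equal, e.g. if $D^{(i)}$ is already simple), so the decrease must be attributed to the adjacency step, and one must check that the algorithm cannot loop through infinitely many splitting steps without ever performing an adjacency step. This follows because a divisor with $\rho_O > 0$ already contains $B_O$ as a summand, and the remaining simple summands $B_{q_j}$ with $q_j \ne O$ each trigger an adjacency step at the next level; thus no infinite path avoids the $\widehat{D}_j$-transitions, and the argument closes.

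\begin{proof}
Consider the tree of divisors described in Section \ref{algorithms}, whose root is the input divisor $D$ and whose nodes are antinef divisors in $\Lambda_\pi$. We show this tree is finite; since part 2 of the algorithm is a bottom-up traversal of this tree performing, at each node, the finite computations of steps $(i.3)$ and $(i.4)$, finiteness of the tree implies termination of the algorithm.

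Recall that the edges of the tree are of two types. First, from a node $D^{(i)}$ we pass, via step $(i.2)$, to the simple divisors $B_{q_j}$, $j=1,\dots,d_i$, appearing in the Zariski decomposition $D^{(i)} = \sum_j \rho_{q_j} B_{q_j}$ of Theorem \ref{thm:zariski}. Since each $B_{q_j}$ is a single branch-basis divisor, we have $\textrm{Supp}_{\overline{E}}(B_{q_j}) \subseteq \textrm{Supp}_{\overline{E}}(D^{(i)})$, so
\[
|\textrm{Supp}_{\overline{E}}(B_{q_j})| \le |\textrm{Supp}_{\overline{E}}(D^{(i)})|.
\]
Second, from a simple divisor $B_{q_j}$ with $B_{q_j} \ne B_O$ we pass, via step $(i.j.2)$, to the adjacent divisor $\widehat{D}_j = D^{(i+1)}$; by Lemma \ref{lemma:smaller-support},
\[
|\textrm{Supp}_{\overline{E}}(\widehat{D}_j)| < |\textrm{Supp}_{\overline{E}}(B_{q_j})|.
\]
If instead $B_{q_j} = B_O$, step $(i.j.1)$ terminates the branch with $H_{B_O} = \mathfrak{m} = (f_{i_0}, f_{i_1})$ and there is no further recursion.

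Now fix any path in the tree starting from the root. Along this path the nonnegative integer $|\textrm{Supp}_{\overline{E}}(\cdot)|$ is non-increasing, by the two displayed inequalities. Moreover, it strictly decreases at every adjacency edge $B_{q_j} \rightsquigarrow \widehat{D}_j$. The path cannot traverse infinitely many splitting edges of the first type without ever traversing an adjacency edge: a node $D^{(i)}$ with $\rho_O(D^{(i)}) > 0$ contains $B_O$ among its simple summands (which is a leaf), while every simple summand $B_{q_j}$ with $q_j \ne O$ has $\rho_O(B_{q_j}) = 0$ and is immediately followed by an adjacency edge in step $(i.j.2)$. Hence every infinite path would have to traverse infinitely many adjacency edges, forcing $|\textrm{Supp}_{\overline{E}}(\cdot)|$ to strictly decrease infinitely often, which is impossible for a nonnegative integer. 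Therefore the tree has no infinite path.

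Finally, at each node the branching is finite: a node $D^{(i)}$ has exactly $d_i = \#\{p \in K \mid \rho_p(D^{(i)}) > 0\} \le |K|$ children corresponding to its simple summands, and each simple summand has at most one child, namely its adjacent divisor. A finitely branching tree with no infinite path is finite (König's lemma). Consequently the algorithm performs finitely many steps and terminates.
\end{proof}
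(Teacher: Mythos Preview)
Your proof is correct and follows essentially the same approach as the paper: both arguments use Lemma \ref{lemma:smaller-support} to show that $|\textrm{Supp}_{\overline{E}}(\cdot)|$ strictly decreases at each adjacency step and does not increase when passing from a divisor to its simple summands, so the recursion must terminate. The only cosmetic difference is that you phrase termination via the tree structure and K\"onig's lemma, whereas the paper argues by a direct induction on the neighbourhood order showing $|\textrm{Supp}_{\overline{E}}(B_{q_j})| > |\textrm{Supp}_{\overline{E}}(B_p)|$ for each simple summand $B_p$ of $\widehat{D}_j$.
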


\begin{proof}
As noted earlier, the points \( q \in K \) such that \( e_q(B_p) \neq 0 \) are totally ordered. Then, using Equation \eqref{eq:proximity-inequality}, the sequence of multiplicities of $B_p$ decrease along those points. Hence, we have $|B_p|_{\overline{E}} = 1$ for some $p\in K$ if and only if $p=O$ and then, $B_p = \textrm{Div}(\pi^*\mathfrak{m})$.

\vskip 2mm

Using the notations in Algorithm \ref{alg:A1}, assume that we are in step $(i)$ and we have a simple divisor $B_{q_j}$ in step $(i.j.1)$.
If $q_j = O$, then $B_O = \textrm{Div}(\pi^*\mathfrak{m})$ and we are done. Otherwise, since $q_j \ne O$, we have that $|\textrm{Supp}_{\overline{E}}(B_{q_j})| > |\textrm{Supp}_{\overline{E}}(\widehat{D}^{(i)}_j)|$ by Lemma \ref{lemma:smaller-support}. Since $D^{(i+1)} := \widehat{D}^{(i)}_j$ admits a decomposition $D^{(i+1)} = \sum_{p \in K} \rho^{(i + 1)}_p B_p$, we  have $|\textrm{Supp}_{\overline{E}}(D^{(i+1)})| \ge |\textrm{Supp}_{\overline{E}}(B_p)|$ for all \( B_p \) with \( \rho{(i+1)}_p > 0 \). Hence, $|\textrm{Supp}_{\overline{E}}(B_{q_j})| > |\textrm{Supp}_{\overline{E}}(B_{p})|$, for all \( p \) with \( \rho^{(i+1)}_p > 0 \), and by induction we obtain the desired result.
\end{proof}

\begin{lemma} \label{lemma:autointersection}
Let $B_q$ be a branch basis divisor associated to a satellite point $q \in K$. Let $\Sigma(\xi) = \langle n, \check{m}_1, \dots, \check{m}_{r} \rangle$ be the semigroup of $\xi: f_q = 0$ and take $\gamma : f_r = 0$ such that $[\gamma . \xi]_O = \check{m}_r$. Then, $B_q^2 = [\zeta . \xi]_O$ with $\zeta : f^{n_{r-1}}_r = 0$. Furthermore, $v_p(f_r^{n_{r-1}}) \ge v_p(B_q)$, for all $p \le q$.
\end{lemma}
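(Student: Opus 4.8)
The plan is to translate everything into the total transform basis and use Noether's formula (Remark~\ref{remark:values_as_intersection}) together with the explicit description of $B_q$ for a satellite point. First I would recall that for $q$ a satellite point, the curvette $\xi : f_q = 0$ has a characteristic that forces the last characteristic exponent to be satellite; concretely the points $p$ with $e_p(B_q)\neq 0$ are totally ordered $O=p_0 < p_1 < \cdots < p_s = q$, and the sequence of multiplicities $e_{p_i}(B_q)$ is non-increasing (by the proximity inequalities~\eqref{eq:proximity-inequality}, since $B_q$ is antinef). The multiplicities of $B_q$ along this chain are exactly the multiplicities occurring in the Euclidean-type algorithm attached to the semigroup $\Sigma(\xi) = \langle n, \check m_1,\dots,\check m_r\rangle$; in particular $e_O(f_q) = n$ and the last multiplicity is $1$. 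On the other side, $\gamma : f_r = 0$ is a maximal contact element with $[\gamma.\xi]_O = \check m_r = v_\xi(f_r)$ and $e_O(f_r) = n/n_{r-1}$, and its total transform multiplicities $e_p(f_r)$ are obtained from a Puiseux series as in~\eqref{eq:puiseux-generators}, sharing with $\xi$ exactly the infinitely near points up to the point where the $r$-th characteristic pair of $\xi$ is resolved.

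For the identity $B_q^2 = [\zeta . \xi]_O$ with $\zeta : f_r^{n_{r-1}} = 0$: by Noether's formula (Remark~\ref{remark:values_as_intersection}), $[\zeta.\xi]_O = n_{r-1}\,[\gamma.\xi]_O = n_{r-1}\check m_r$. On the other hand $B_q^2 = -B_q\cdot(-E_q) $-type bookkeeping is not the right route; instead use $B_q^2 = B_q\cdot B_q = \sum_p e_p(B_q)^2$ via the total transform pairing, or better, use $B_q^2 = v_q(B_q)$. Indeed $B_q\cdot B_q = B_q\cdot B_q$, and since $B_q$ is the divisor of a curvette through $q$, Remark~\ref{remark:values_as_intersection} gives $v_q(f_q) = [\xi.\xi_q']_O$ with $\xi_q'$ the curvette at $q$; but the cleanest statement is $B_q^2 = v_q(B_q)$, the value of $B_q$ at its own terminal point. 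So the core computation reduces to the numerical identity
\[
v_q(B_q) \;=\; n_{r-1}\,\check m_r,
\]
which is a classical fact about curvettes at satellite points: $v_q(f_q)$ equals the conductor-type quantity $n_{r-1}\check m_r$ because at a satellite point the value coincides with the product of the multiplicity of the last maximal contact element's companion and $\check m_r$. I would prove this by induction on $r$ using the recursion~\eqref{eq:semigroup-generators} for $\check m_i$ together with the proximity relation~\eqref{eq:proximity} expressing $v_q(B_q)$ as $e_q(B_q) + \sum_{q\to p} v_p(B_q)$, unwinding the satellite chain; alternatively, cite~\cite[Ch.~5]{casas} where $v_q(f_q)$ for a curvette is computed in terms of the characteristic exponents.

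For the inequality $v_p(f_r^{n_{r-1}}) \ge v_p(B_q)$ for all $p \le q$: write it as $n_{r-1} v_p(f_r) \ge v_p(B_q)$. Here I would compare the two curves point by point along the chain $O = p_0 < \cdots < p_s = q$. The curvette $f_r$ shares its first infinitely near points with $\xi$ up through the resolution of the $r$-th characteristic exponent, and $\xi = f_q$ itself passes through all of $p_0,\dots,p_s$; since $B_q$ and $f_q$ differ only in that $B_q$ is the exceptional part, $v_p(B_q) = v_p(f_q)$ for all $p\le q$. Thus the claim is $n_{r-1} v_p(f_r) \ge v_p(f_q)$ for $p\le q$, i.e. $[\,(f_r^{n_{r-1}} = 0)\,.\,(f_p\text{-curvette})\,]_O \ge [\,(f_q=0)\,.\,(f_p\text{-curvette})\,]_O$ for each $p\le q$. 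Since $e_O(f_r^{n_{r-1}}) = n_{r-1}\cdot(n/n_{r-1}) = n = e_O(f_q)$, the two curves have the same multiplicity at $O$, and along the shared infinitely near points $f_r^{n_{r-1}}$ has multiplicity at least that of $f_q$ (the $n_{r-1}$ branches of $f_r$ pass through the same points as $f_q$ with matching total multiplicity by~\eqref{eq:puiseux-generators}), while at points beyond those shared by $f_r$, $f_q$ may continue but $f_r^{n_{r-1}}$ does not — however $p\le q$ and all points $p$ with $v_p(B_q)\neq 0$ are among the shared ones since $q$ is the last point of $\xi$ and lies in the support dictated by the $r$-th characteristic pair. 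A clean way to organize this is: $v_p(B_q) = B_q\cdot B_p$ and $v_p(f_r^{n_{r-1}}) = \mathrm{Div}(\pi^*f_r^{n_{r-1}})_{\rm exc}\cdot B_p$, both expressed via~\eqref{eq:proximity-inequality}-type sums of multiplicities, and then induct downward from $q$ using the proximity recursion.

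\textbf{Main obstacle.} The delicate point is the exact numerical identity $v_q(B_q) = n_{r-1}\check m_r$ (equivalently $B_q^2 = [\zeta.\xi]_O$): it requires carefully tracking how the multiplicity sequence of a curvette at a \emph{satellite} point terminates, and relating the terminal value to the last semigroup generator via the definition~\eqref{eq:semigroup-generators}. I expect this to need the full Euclidean/continued-fraction structure of the satellite chain (as in \cite[\S5.7--5.8]{casas}), and getting the bookkeeping exactly right — rather than off by a factor of $n_r = 1$ or by a proximity term — will be where the real work lies. The inequality, by contrast, should follow more mechanically once $v_p(B_q) = v_p(f_q)$ and the shared-infinitely-near-points picture for $f_r$ versus $f_q$ are in place.
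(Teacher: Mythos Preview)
Your treatment of the first identity $B_q^2 = n_{r-1}\check{m}_r$ is essentially on the right track, but you have misidentified it as the ``main obstacle''. The paper dispatches it in one line: since $q$ is satellite, the classical formula $\sum_{p} e_p(B_q)^2 = \sum_{i=1}^{r}(m_i-m_{i-1})n_{i-1}$ holds (see \cite[\S5.10]{casas}), and a straightforward rearrangement using the definition \eqref{eq:semigroup-generators} of $\check{m}_r$ gives $\sum_{i=1}^{r}(m_i-m_{i-1})n_{i-1} = \sum_{i=1}^{r-1}(n_{i-1}-n_i)m_i + n_{r-1}m_r = n_{r-1}\check{m}_r$. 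No induction on $r$ or continued-fraction bookkeeping is needed.

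The genuine difficulty, and the place where your argument breaks, is the inequality $v_p(f_r^{n_{r-1}}) \ge v_p(B_q)$ for $p\le q$. Your claim that ``all points $p$ with $v_p(B_q)\neq 0$ are among the shared ones'' is false: the satellite points between the last free point of $B_q$ and $q$ itself are \emph{not} infinitely near points of $f_r$ (the maximal contact element $f_r$ separates from $\xi$ at a free point before this satellite chain begins), yet they certainly satisfy $v_p(B_q)\neq 0$. Already in the cusp example $\xi:y^2-x^3=0$ with $f_1=y$, $n_0=2$, the satellite point $q=p_2$ has $e_{p_2}(y)=0$ while $v_{p_2}(B_q)=6$; the inequality $v_{p_2}(y^2)=6\ge 6$ holds, but not for the reason you give. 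At such satellite points the comparison of values cannot be read off from shared multiplicities, and your fallback ``induct downward from $q$ using the proximity recursion'' is not fleshed out enough to see why the inequality survives the extra positive multiplicities $e_p(f_q)$ that accumulate along the satellite chain.

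The paper's route here is genuinely different from yours: it rewrites the inequality, via Remark~\ref{remark:values_as_intersection} and the equality $e_O(f_r^{n_{r-1}})=e_O(\xi)=n$, as $d_{\cC}(\xi,\eta_p) \ge d_{\cC}(\gamma,\eta_p)$ in P\l oski's ultrametric on plane branches, and then invokes the structural results of \cite[3.1, 3.2, 3.4]{AACGA11} describing how $d_{\cC}$-distances to curvettes $\eta_p$ behave along the dual graph. This gives the inequality (with equality precisely when $p$ lies below the dead-end $q_g$ or on the segment joining $q_g$ to $q$) without any pointwise multiplicity comparison. If you want to avoid the ultrametric machinery, you would need to carry out an explicit value computation along the last satellite chain, which is doable but is exactly the delicate bookkeeping you flagged for the \emph{other} claim.
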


\begin{proof}
Assume that $B_q = \sum_{p \in K} e_p \overline{E}_p$. The first claim follows from the following computation:
\[
  B_q^2 = \sum_{p \in K} e_p^2 = \sum_{i = 1}^r (m_i - m_{i-1}) n_{i-1} = \sum_{i = 1}^{r-1} (n_{i-1} - n_i)m_i + n_{r-1}m_r = n_{r-1}\check{m}_r =  n_{r-1} [\gamma . \xi]_O=   [\zeta . \xi ]_O.
\]
where the second equality is true since  $q \in K$ is a satellite point (see  \cite[\S 5.10]{casas}) and the fourth equality comes from \eqref{eq:semigroup-generators}.

\vskip 2mm

To prove the second claim, in virtue of Remark \ref{remark:values_as_intersection}, it suffices to check the inequalities $[\zeta . \eta_p]_O \ge  [\xi . \eta_p]_O$ for any $p \le q$ with $\eta_p: f_p = 0$. We will use known properties of the ultrametric $d_{\cC}$ distance (introduced in \cite{ploski}) defined over the space $\cC$ of plane branches as
$ \frac{1}{d_\cC(C,D)} =\frac{[C . D]_O}{e_O (C) e_O(D)}$ for any $C,D \in \cC$. Hence, the inequalities above are equivalent to  $d_{\cC}(\xi,\eta_p)\ge d_{\cC}(\gamma,\eta_p)$ for any  $p \le q$, since in our case
\[
  \frac{[\zeta.\eta_p]_O}{e_O (\zeta) e_O(\eta_p)}= \frac{n_{g-1} [\gamma.\eta_p]_O}{n_{g-1} e_O (\gamma) e_O(\eta_p)} =\frac{1}{d_{\cC}(\gamma,\eta_p)} .
\]

Notice that $f_q =f_{q_g}$ for some point $q_g$, $O \leq q_g \leq q$, which corresponds to a dead-end in the dual graph of $B_q$. Now we summarize the results on the ultrametric space of plane branches of \cite[3.1, 3.2 and 3.4]{AACGA11} adapted to our setting:

\begin{itemize}
  \item[$\cdot$] $d_{\cC}(\xi,\eta_p)= d_{\cC}(\gamma,\eta_p)$, if $O \leq p < q_g$;

  \item[$\cdot$] $d_{\cC}(\xi,\eta_p)= d_{\cC}(\gamma,\eta_p)$, if $q_g < p \leq q$ and in the dual graph of $B_q$ the vertex of $p$ lies on the segment joining the vertexes $q$ and $q_g$;

  \item[$\cdot$] $d_{\cC}(\xi,\eta_p)> d_{\cC}(\gamma,\eta_p)$, otherwise.
\end{itemize}
Hence, the second claim follows.
\end{proof}

\begin{proposition} \label{prop:good-curve}
Using the notations in Algorithm \ref{alg:A1}, at any step $(i)$ of the algorithm, there exists a power of a maximal contact element $f^{n_j}_{\tau_j} \in \mathcal{O}_{X, O}$ as required at step $(i.j.3)$ and such element belongs to $H_{B_{q_j}}$ but not to $H_{\widehat{D}_j}$.
\end{proposition}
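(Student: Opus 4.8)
The plan is to treat separately the cases where $q_j$ is a free point and where it is a satellite point of $K$, and in each case to locate $\tau_j$ directly on the Enriques diagram of the simple divisor $B_{q_j}$. Write the points $r\in K$ with $e_r(B_{q_j})\neq 0$ as a chain $O=r_0<r_1<\cdots<r_s=q_j$ (they are totally ordered because $f_{q_j}$ is irreducible, cf.\ Construction~\ref{remark:curvettes}), set $\xi : f_{q_j}=0$, and let $p=r_k$ be the last free point among them; note $p\neq O$ since $r_1$ is always free. The first task is to find, among the maximal contact elements $\{f_i\}_{i\in I}$ of $\pi$ --- i.e.\ among the curvettes attached to the dead-end vertices of the dual graph of $\pi$ --- one, call it $f_{\tau_j}$, whose Enriques diagram coincides up to and including $p$ with that of the following ``model'' curvette: if $q_j$ is satellite the model is the last approximate root of $\xi$, that is the maximal contact element $f_g$ of $\xi$ in the notation of Subsection~\ref{max_contact}, which satisfies $e_p(f_g)=1$ and $e_O(f_g)=n/n_{g-1}$; if $q_j$ is free (so $p=q_j$) the model is the curvette through $q_j$ that continues in a generic free direction. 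In both cases one must argue that such a model can be completed to a maximal contact element of $\pi$: if the dead-end of the model is already a dead-end of $\pi$ there is nothing to prove, and otherwise the further points that $\pi$ blows up on the corresponding exceptional component either form a single satellite chain --- in which case that vertex remains a dead-end of $\pi$ --- or they branch off, and following a free branch one arrives at a dead-end $\tau_j$ of $\pi$ whose curvette still satisfies $e_p(f_{\tau_j})=1$ and $e_O(f_{\tau_j})\le e_O(B_{q_j})$. This makes $n_j:=e_O(B_{q_j})/e_O(f_{\tau_j})$ a well-defined positive integer ($n_j=n_{g-1}$ when $q_j$ is satellite, $n_j=1$ when $q_j$ is free), which is exactly the datum required at step~$(i.j.3)$.

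Next I would prove $f_{\tau_j}^{n_j}\in H_{B_{q_j}}$. By \eqref{eq:ideal} it suffices to check $v_r(f_{\tau_j}^{n_j})=n_j\,v_r(f_{\tau_j})\ge v_r(B_{q_j})$ for every $r\le q_j$, and by Remark~\ref{remark:values_as_intersection} this is the inequality $n_j\,[\gamma_j.\eta_r]_O\ge [\xi.\eta_r]_O$, where $\gamma_j : f_{\tau_j}=0$ and $\eta_r : f_r=0$. When $q_j$ is satellite this is precisely the second assertion of Lemma~\ref{lemma:autointersection}, applied with $f_q=f_{q_j}$, $\gamma=f_{\tau_j}$ and exponent $n_{r-1}=n_j$. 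When $q_j$ is free, the choice of $\tau_j$ guarantees that $B_{\tau_j}$ and $B_{q_j}$ carry the same proximity data below $q_j$, so a downward induction along $r_s,r_{s-1},\dots,r_0$ --- using that the excess of a simple divisor vanishes away from its associated point --- yields $e_r(f_{\tau_j})=e_r(B_{q_j})$ for all $r\le q_j$; hence $v_r(f_{\tau_j})=v_r(B_{q_j})$, $n_j=1$, and the inclusion holds with equality on all the relevant valuations.

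Finally, $f_{\tau_j}^{n_j}\notin H_{\widehat{D}_j}$ follows at once from the multiplicity at the origin. Since $O$ is the first point blown up, $v_O=e_O$ there, so $v_O(f_{\tau_j}^{n_j})=n_j\,e_O(f_{\tau_j})=e_O(B_{q_j})$, whereas Proposition~\ref{prop:adjacent-ideal}(2) gives $v_O(\widehat{D}_j)=e_O(\widehat{D}_j)=e_O(B_{q_j})+1$. As $E_O\le\widehat{D}_j$, the description \eqref{eq:ideal} of $H_{\widehat{D}_j}$ shows $f_{\tau_j}^{n_j}\notin H_{\widehat{D}_j}$.

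I expect the first step to be the real obstacle: one must control which dead-ends of the (possibly much larger) morphism $\pi$ lie above the last free point $p$, and check that at least one of them carries a curvette with exactly $e_p=1$ and with multiplicity at $O$ at most $e_O(B_{q_j})$. The subtlety is that $\pi$ need not be the minimal resolution of $f_{q_j}$, so the obvious candidate --- the last approximate root of $f_{q_j}$ --- may fail to be a maximal contact element of $\pi$, forcing one to track the extra infinitely near points along a free branch. Once $\tau_j$ is fixed, the remaining two steps reduce, respectively, to Lemma~\ref{lemma:autointersection} (and a short proximity computation in the free case) and to a one-line comparison of multiplicities at $O$.
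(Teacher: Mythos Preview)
Your proposal is correct and follows essentially the same route as the paper: the same free/satellite case split on $q_j$, the same appeal to Lemma~\ref{lemma:autointersection} for the inclusion $f_{\tau_j}^{n_j}\in H_{B_{q_j}}$ in the satellite case, and the same multiplicity-at-$O$ comparison (via Proposition~\ref{prop:adjacent-ideal}) for $f_{\tau_j}^{n_j}\notin H_{\widehat D_j}$. The only point where your write-up differs is the justification that a suitable dead-end $\tau_j$ exists: the paper argues directly that if the last free point $p$ (resp.\ $q_j$ in the free case) is not already a dead-end of $\pi$, then there is a totally ordered chain of \emph{free} points $p\le p_1\le\cdots\le q$ to a dead-end $q$ with $e_p(B_q)=1$, and checks $B_q\cdot B_{q_j}=B_p\cdot B_{q_j}=\check m_g$ because $B_{q_j}\cdot\overline E_{p_i}=0$ along that chain; your dichotomy ``single satellite chain vs.\ free branch'' is phrased less sharply (a satellite point blown up above $p$ can still give $p$ two neighbours in the dual graph of $\pi$, so $p$ need not remain a dead-end in that case), but you explicitly flag this as the delicate step and the conclusion you reach is the same.
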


\begin{proof}
We are going to break the proof of the first statement in two cases depending on whether the point $q_j \in K$ is free or satellite. With the notations from  step $(i.j.3)$, $p \in K$ will be the last free point such that $e_p(B_{q_j}) \neq 0$.

\vskip 2mm

Assume first that $q_j$ is free, i.e. $p = q_j$. If, in addition, the vertex of $q_j$ is a dead-end of the dual graph of $F$ we are done, since $f_{\tau_j} = f_{q_j}$ and $f_{q_j} \in H_{B_{q_j}}$.
If $q_j$ is not a dead-end of the dual graph, there is a dead-end $q \in K$ and a totally ordered sequence $q_j \le p_1 \le \cdots \le p_r \le q$ of free points such that $e_{q_j}(B_q) = e_{p_1}(B_q) = \cdots = e_{p_r}(B_q) = e_q(B_q) = 1$, by \eqref{eq:proximity-inequality}. Therefore, $f_{\tau_j} = f_q$ with $e_{q_j}(f_q) = 1$ and $B_q = B_p + \overline{E}_{p_1} + \cdots + \overline{E}_{p_r} + \overline{E}_q$, which implies, by \eqref{eq:proximity}, that $f_q \in H_{B_q} \varsubsetneq H_{B_{q_j}}$. In either case we have that $n_j = e_O(B_{q_j})/e_O(f_{\tau_j}) = 1$.

\vskip 2mm

Now, assume that $q_j$ is satellite and hence $p < q_j$. Let $\Sigma(\xi) = \langle n, \check{m}_1, \dots, \check{m}_g\rangle$ the semigroup of $\xi : f_{q_j} = 0$. By \cite[\S 5.8]{casas}, $p$ has the property that any $\gamma : f_p = 0, f_p \in \mathcal{O}_{X, O}$ satisfies $[\gamma . \xi]_O = B_p \cdot B_{q_j} = \check{m}_g$. However,  it may happen that $p \in K$ is not a dead-end. In this case, using the same argument as before, there is a dead-end $q \in K$ and a totally ordered sequence $p \le p_1 \le \cdots \le p_r \le q$ of free points and $B_q = B_p + \overline{E}_{p_1} + \cdots +\overline{E}_{p_r} +  \overline{E}_q$. Since $p$ is the last free point of $B_{q_j}$, $B_{q_j} \cdot \overline{E}_{p_i} = 0$ for $i = 1, \dots, r$ and also $B_{q_j} \cdot \overline{E}_{q} = 0$. Hence, $[\widetilde{\gamma} . \xi]_O = B_q \cdot B_{q_j} =  \check{m}_g$ with $\widetilde{\gamma} : f_q = 0$, i.e. we can take $f_{\tau_j} = f_q$. We can then apply Lemma \ref{lemma:autointersection} to $B_{q_j}$ with $f_g = f_{\tau_
j}
$ yielding that $f_{\tau_j}^{n_j} \in H_{B_{q_j}}$ with $n_j = e_O(B_{q_j})/e_O(f_{\tau_j})$.

\vskip 2mm

Finally, if $f_{\tau_j}^{n_j}\in \mathcal{O}_{X, O}$ fulfills the requirements of step $(i.j.3)$, then  $e_O(f_{\tau_j}^{n_j}) = e_O(B_{q_j})$, but $e_O(\widehat{D}_j) > e_O(B_{q_j})$ by Proposition \ref{prop:adjacent-ideal}, therefore we have that $f_\tau^{n_j} \not\in H_{\widehat{D}_j}$.
\end{proof}


\begin{theorem} \label{thm:main}
Let \( \pi : X' \rightarrow X \) be a proper birational morphism and let \( D \in \Lambda_{\pi} \). Then, Algorithm \ref{alg:A1} computes a set of generators for \( H_D \) that are monomial in any given set of maximal contact elements of \( \pi \).
\end{theorem}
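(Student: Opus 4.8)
The plan is to prove two things simultaneously by induction on the size of the support in the total transform basis: first, that the algorithm terminates and produces a valid output, and second, that the output is of the claimed monomial form. Termination is already guaranteed by Proposition \ref{prop:finiteness}, so the real content is correctness of the recursion combined with the monomial bookkeeping. I would organize the induction around the quantity $|\textrm{Supp}_{\overline{E}}(D)|$, with base case $D = B_O = \textrm{Div}(\pi^*\mathfrak{m})$, where step $(i.j.1)$ returns $H_{B_O} = \mathfrak{m} = (f_{i_0}, f_{i_1})$ with $f_{i_0}, f_{i_1}$ smooth and transverse at $O$ — this is correct by Lemma \ref{lemma:nakayama} (since $\overline{E}_O = B_O$) and is trivially monomial in the maximal contact elements.

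For the inductive step, suppose $D^{(i)}$ is antinef with $|\textrm{Supp}_{\overline{E}}(D^{(i)})| > 1$. Decompose $D^{(i)} = \sum_{j=1}^{d_i} \rho_{q_j} B_{q_j}$ via Theorem \ref{thm:zariski}. For each simple factor $B_{q_j} \neq B_O$, Proposition \ref{prop:good-curve} guarantees that step $(i.j.3)$ can indeed select a power $f_{\tau_j}^{n_j}$ of a maximal contact element lying in $H_{B_{q_j}} \setminus H_{\widehat D_j}$; combined with Proposition \ref{prop:adjacent-ideal}, which says $H_{\widehat D_j} \varsubsetneq H_{B_{q_j}}$ is adjacent (codimension one), we get $H_{B_{q_j}} = (f_{\tau_j}^{n_j}) + H_{\widehat D_j}$ as claimed in step $(i.j.5)$ — here I would spell out that adding one element outside a codimension-one subideal to that subideal generates the whole ideal. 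By Lemma \ref{lemma:smaller-support}, $|\textrm{Supp}_{\overline{E}}(\widehat D_j)| < |\textrm{Supp}_{\overline{E}}(B_{q_j})| \le |\textrm{Supp}_{\overline{E}}(D^{(i)})|$, so the inductive hypothesis applies to $D^{(i+1)} := \widehat D_j$: the recursive call correctly computes $H_{\widehat D_j}$ with generators monomial in $\{f_i\}$. Hence $H_{B_{q_j}}$ has generators monomial in $\{f_i\}$ (the new generator $f_{\tau_j}^{n_j}$ is itself such a monomial). Then step $(i.3)$ forms the product $\prod_j H_{B_{q_j}}^{\rho_{q_j}}$, which by Theorem \ref{thm:zariski} equals $H_{D^{(i)}}$; since products of monomials in $\{f_i\}$ are again monomials in $\{f_i\}$, the resulting generators $h_1,\dots,h_{s_i}$ are monomial. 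Finally step $(i.4)$ only discards some of these generators — those whose pullbacks lie in $\mathcal{O}_{X'}(-D^{(i)} - \overline{E}_O)$ — which by Nakayama's lemma and Lemma \ref{lemma:nakayama} (i.e. $H_{D^{(i)} + \overline{E}_O} = \mathfrak{m} H_{D^{(i)}}$, as explained in Remark \ref{remark:nakayama}) still leaves a system of generators, and discarding generators preserves the monomial property. This closes the induction and proves the theorem for $D^{(0)} = D$.

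The one point requiring a little care, and the main obstacle, is justifying that step $(i.4)$ is legitimate, namely that after removing the generators whose pullbacks lie in $\mathcal{O}_{X'}(-D^{(i)} - \overline{E}_O)$ one still has a generating set. The argument is: a set of elements of $H_{D^{(i)}}$ generates $H_{D^{(i)}}$ if and only if their images generate the $\mathbb{C}$-vector space $H_{D^{(i)}}/\mathfrak{m} H_{D^{(i)}} = H_{D^{(i)}}/H_{D^{(i)} + \overline{E}_O}$ (Nakayama plus Lemma \ref{lemma:nakayama}); an element whose pullback lies in $\mathcal{O}_{X'}(-D^{(i)} - \overline{E}_O)$ is exactly an element of $H_{D^{(i)} + \overline{E}_O} = \mathfrak{m} H_{D^{(i)}}$, hence maps to zero in the quotient and is redundant. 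One should note, however, that removing redundant generators \emph{one at a time} in this way is safe only because the quotient is finite-dimensional and each discarded generator is individually in $\mathfrak{m} H_{D^{(i)}}$; since the non-discarded generators already span the quotient, the discarded ones add nothing, so the order of removal is irrelevant. I would also remark — as the authors do in Example \ref{ex:example} — that this step genuinely does more than discard multiples of other generators, so it is worth stating the vector-space criterion explicitly rather than appealing to a naive minimality argument. Everything else is a direct assembly of the previously established propositions and lemmas.
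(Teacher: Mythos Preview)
Your proof is correct and follows essentially the same approach as the paper's: both arguments combine Proposition~\ref{prop:adjacent-ideal}, Lemma~\ref{lemma:smaller-support}, Proposition~\ref{prop:good-curve}, Theorem~\ref{thm:zariski}, and Lemma~\ref{lemma:nakayama} in the same way. The only organizational difference is that the paper inducts on the order of the neighbourhood of $q_j$ (i.e.\ on the simple divisors $B_{q_j}$, reducing to them via Zariski), whereas you induct directly on $|\textrm{Supp}_{\overline{E}}(D^{(i)})|$ for the full divisors the recursion visits; these are equivalent since $\textrm{Supp}_{\overline{E}}(B_{q_j}) = \{p : p \le q_j\}$, and your justification of step~$(i.4)$ via the Nakayama quotient is in fact more explicit than the paper's one-line appeal to Lemma~\ref{lemma:nakayama}.
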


\begin{proof}
Let us prove that the $i$-th step of the algorithm returns a system of generators of $H_{D^{(i)}}$ which has the desired properties. By Zariski's factorization Theorem \ref{thm:zariski}, it is enough to focus on computing generators for each simple ideal $H_{B_{q_j}}, j = 1, \dots, d_i,$ in the decomposition of $D^{(i)}$. Fixing $B_{q_j}$ at step $(i.2)$, we will make induction on the order of the neighbourhood that $q_j \in K$ belongs to, and we will show that Algorithm  \ref{alg:A1} computes generators for $H_{B_{q_j}}$ which are monomials in the set of maximal contact elements.

\vskip 2mm

If $q_j = O$, then $B_O = \textrm{Div}(\pi^*\mathfrak{m})$ and step ($i.j$.1) returns $H_{D^{(i)}} = \mathfrak{m}$, since a pair of smooth transverse elements generate $\mathfrak{m}$. By construction, any set of maximal contact elements contain such a pair of elements.

\vskip 2mm

Assume now that $q_j \neq O$ and that the algorithm  computes the generators of the ideals associated to $B_{p}$ for $ p < q_j$. By Proposition \ref{prop:adjacent-ideal}, $H_{\widehat{D}_j} \varsubsetneq H_{B_{q_j}}$ are adjacent ideals. Since $\widehat{D}_j = \sum_{p < q_j} \rho^{(i)}_p B_p$, we can apply the induction hypothesis to the simple divisors $B_{p}, p < q_j$ such that $\rho^{(i)}_p \ne 0$ and apply Theorem \ref{thm:zariski} to get
\[
  H_{\widehat{D}_j} = \prod_{p < q_j} H_{B_p}^{\rho^{(i)}_p} \varsubsetneq H_{D^{(i)}}.
\]
At this point is it enough to add any element that belongs to $H_{B_{q_j}}$ but not to $H_{\widehat{D}_j}$ to get a system of generators of $H_{B_{q_j}}$. By Proposition \ref{prop:good-curve}, the element chosen at step ($i.j$.3) has the desired properties, namely, it is a power of a maximal contact element. Finally, we can remove unnecessary elements from the system of generators of $H_{D^{(i)}}$ using Lemma \ref{lemma:nakayama}.

\vskip 2mm

The dependency on the set of maximal contact elements \( \{f_i\}_{i\in I} \) is only used in step $(i.j.3)$. The conditions required to \( \{f_p\}_{i\in I} \) depend only on a finite number of valuations associated to the exceptional divisors of \(\pi\). These conditions are fulfilled by an infinite number of elements which can be part of a set of maximal contact elements.
\end{proof}

\begin{remark}
We would like to stress the generality of the monomial generators in Theorem \ref{thm:main}. Consider the monomials \(\mathbf{z}^{\boldsymbol{\alpha}} = \prod_{i \in I} z_i^{\alpha_i}, \boldsymbol{\alpha} = (\alpha_i)_{i \in I} \) in the variables \( z_i, \hskip 2mm i \in I \). Each variable \( z_i \) formally represents all possible elements \( f_p \) for a fixed dead-end of the dual graph of \( \pi \). They all have the same value for the valuations associated to the exceptional divisors. Take now any set of maximal contact elements \( \boldsymbol{f} = \{f_{p_i}\}_{i \in I} \) and denote \( \boldsymbol{z}^{\boldsymbol{\alpha}}_{\boldsymbol{f}} = \prod_{i \in I} f_{p_i}^{\alpha_i} \) the specialization \( z_i \mapsto f_{p_i} \).

The result of Theorem \ref{thm:main} is that Algorithm \ref{alg:A1} returns formally \( (\boldsymbol{z}^{\boldsymbol{\alpha}_1}, \dots, \boldsymbol{z}^{\boldsymbol{\alpha}_r}) \) and that for any two sets of maximal contact elements \( \boldsymbol{f} = \{f_{p_i}\}_{i \in I} \) and \( \boldsymbol{g} = \{g_{p_i}\}_{i \in I} \), both specializations \( H_{D, \boldsymbol{f}}  = (\boldsymbol{z}_{\boldsymbol{f}}^{\boldsymbol{\alpha}_1}, \dots, \boldsymbol{z}_{\boldsymbol{f}}^{\boldsymbol{\alpha}_r}) \) and \(H_{D, \boldsymbol{g}} = (\boldsymbol{z}_{\boldsymbol{g}}^{\boldsymbol{\alpha}_1}, \dots, \boldsymbol{z}_{\boldsymbol{g}}^{\boldsymbol{\alpha}_r}) \) are equal, \( H_D = H_{D, \boldsymbol{f}} = H_{D, \boldsymbol{g}} \), and are wanted the complete ideal $H_D$.
\end{remark}

\begin{corollary}\label{cor:monomial_exp}
The set of monomial expressions returned by Algorithm \ref{alg:A1} is a topological invariant of $D$, i.e. it is an invariant of the weighted dual graph of $D$.
\end{corollary}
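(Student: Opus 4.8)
The plan is to trace Algorithm \ref{alg:A1} and check that every branch it takes, and every monomial it records, is a function of the weighted dual graph of $D$ alone. Recall (Definition \ref{EquisingIdeal} and Subsection \ref{max_contact}) that the weighted dual graph of $D \in \Lambda_\pi$ determines, and is determined by, the following combinatorial data: the proximity matrix $P$ and the self-intersections $E_p^2$ (equivalently the intersection matrix $N = -P^\top P$); the value vector $\mathbf{v}$ recorded as the weights, together with the multiplicity vector $\mathbf{e}$ (from $\mathbf{e}^\top = P\,\mathbf{v}^\top$) and the excess vector $\boldsymbol{\rho}$ (from $\boldsymbol{\rho}^\top = P^\top \mathbf{e}^\top$); the partition of $K$ into free and satellite points; the set of dead-ends; and, for each dead-end $p$, the equisingularity type of the maximal contact element $f_p$ together with all its values $v_q(f_p) = v_q(B_p)$ (these are read from $P$ alone, since the excess vector of $B_p$ is the $p$-th unit vector). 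Accordingly, what I would prove is the following precise form of the statement: if $D, D'\in\Lambda_\pi$ have the same weighted dual graph, then the canonical identification of the two graphs induces a bijection between the runs of Algorithm \ref{alg:A1} on $D$ and on $D'$ matching outputs, so the set of monomial expressions in the formal dead-end variables $z_i$ (notation as in the Remark after Theorem \ref{thm:main}) is the same.

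Next I would go through the steps. Step 1 merely attaches one formal variable $z_i$ to each dead-end. In step $(i.1)$ the decomposition $D^{(i)} = \sum_p \rho^{(i)}_p B_p$ is computed from $\boldsymbol{\rho}^{(i)\top} = P^\top \mathbf{e}^{(i)\top}$, and each simple divisor $B_{q_j}$ is itself pinned down by the graph (its own dual graph being a subtree). Step $(i.j.1)$ tests the graph condition $B_{q_j} = B_O$ and, when it holds, outputs $(z_{i_0}, z_{i_1})$ for the two dead-end classes whose maximal contact elements are smooth and mutually transverse, a condition checkable on the multiplicity and proximity data. Step $(i.j.2)$ runs the unloading Algorithm \ref{alg:unloading} on $B_{q_j} + E_O$; that procedure consults only the excesses and self-intersections of the divisor being unloaded, so the antinef closure $\widehat{D}_j$ is a function of the graph. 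In step $(i.j.3)$ the last free point $p$ with $e_p(B_{q_j}) \neq 0$, the index $\tau_j$ with $e_p(f_{\tau_j}) = 1$ and $e_O(f_{\tau_j}) \le e_O$, and the integer $n_j = e_O/e_O(f_{\tau_j})$ are all determined by the multiplicity vectors of $B_{q_j}$ and of the $f_p$'s; step $(i.j.5)$ then appends the monomial $z_{\tau_j}^{n_j}$, and step $(i.3)$ multiplies the already-built monomial ideals with exponents $\rho_{q_j}$. None of these introduces any analytic information.

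The one step requiring a genuine argument is the Nakayama reduction $(i.4)$, which a priori tests whether an element $h_k \in \mathcal{O}_{X,O}$ lies in the sheaf ideal $\mathcal{O}_{X'}\big(-D^{(i)} - \overline{E}_O\big)$. Here I would invoke Theorem \ref{thm:main}: by the preceding steps $h_k$ is a monomial $\prod_i f_{p_i}^{\alpha_i}$ in the maximal contact elements, so $v_p\big(\textrm{Div}(\pi^* h_k)\big) = \sum_i \alpha_i\, v_p(f_{p_i})$ for every $p \in K$. Each $v_p(f_{p_i}) = v_p(B_{p_i})$ is a topological invariant of $f_{p_i}$, hence a function of the dual graph (it also equals an intersection multiplicity, cf. Remark \ref{remark:values_as_intersection}), while $v_p(D^{(i)})$ and $v_p(\overline{E}_O)$ are likewise read from the graph. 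Therefore the criterion ``$h_k$ is redundant iff $v_p(\textrm{Div}(\pi^* h_k)) \ge v_p(D^{(i)}) + v_p(\overline{E}_O)$ for all $p$'' depends only on the exponent vector $\boldsymbol{\alpha}$ and the graph, so step $(i.4)$ keeps or discards each monomial by a purely combinatorial rule.

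Combining these observations and running the induction on the order of the neighbourhood of $q_j$ exactly as in the proof of Theorem \ref{thm:main}, I would conclude that the whole output of Algorithm \ref{alg:A1}, read as a set of monomials in the $z_i$, is determined by the weighted dual graph of $D$; since Corollary \ref{cor:monomial_exp} is precisely this statement, this finishes the proof. The main obstacle is step $(i.4)$: it is the only place where the algorithm handles genuine elements of $\mathcal{O}_{X,O}$ rather than graph data, and its combinatorial nature rests on the monomiality established in Theorem \ref{thm:main} together with the fact that the value of a maximal contact element along each exceptional divisor is an equisingularity invariant.
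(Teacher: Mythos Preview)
Your proposal is correct and follows essentially the same approach as the paper: the paper's proof is a one-line reference to the proof of Theorem \ref{thm:main}, asserting that the algorithm uses only the equisingularity types of the maximal contact elements and the weighted dual graph of $D$, while you carry out precisely this verification step by step. Your explicit treatment of step $(i.4)$---showing that the Nakayama test reduces to comparing $\sum_i \alpha_i\, v_p(B_{p_i})$ with $v_p(D^{(i)}+\overline{E}_O)$---is a welcome clarification that the paper leaves implicit.
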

\begin{proof}
From the proof of Theorem \ref{thm:main}
it follows that the algorithm only uses the information of the equisingularity types of the maximal contact elements and of the dual graph of $D$ weighted by the natural numbers $v_p(D)$ for $p \in K$.
\end{proof}



\section{Application to some families of complete ideals}

Let $(X,O)$ be a germ of smooth complex surface and let \( \pi : X' \longrightarrow {X} \) be a proper birational transform. The sheaf ideals \( O_{X'}(D) \) and its pushforward, for some \( D \in \Lambda_{\pi} \), arise in many different contexts. The goal of this sections is two show how Algorithm \ref{alg:A1} and Theorem \ref{thm:main} applies to different problems.

\vskip 2mm

Our approach is specially useful when studying families of divisor \( \{D_i\} \) in \( \Lambda_{\pi} \) since all the generators for all the ideals \( H_{D_i} \) will be given as monomials in any set of maximal contact elements. 

\subsection{Integral closure}

\vskip 2mm


Let \( \mathfrak{a} \subseteq \mathcal{O}_{X, O}\) be an ideal which can be assumed to be \( \mathfrak{m} \)-primary after considering the decomposition \( \mathfrak{a} = (a) \cdot \mathfrak{a}' \) with \( a = \gcd(\mathfrak{a}) \). Let $\pi: X' \rightarrow X$ be a log-resolution of the ideal $\fa$, i.e., a proper birational morphism such that there exists $F$ an effective Cartier divisor such that $\fa\cdot\cO_{X'} = \cO_{X'}\left(-F\right)$. Then, the integral closure $\overline{\fa}$ of \( \mathfrak{a} \) is just the ideal $H_F$, 
see \cite[\S 8.3]{casas}.

\vskip 2mm
Therefore, we have a very simple method to compute the integral closure of any planar ideal that boils down to the following steps:

\vskip 2mm
{\it
$\cdot$ Compute the divisor $F$ associated to the log-resolution of \( \mathfrak{a} \) by  using the algorithm from \cite{ACAMB16}.

\vskip 2mm

$\cdot$ Compute a set of generators for the ideal $H_F$ using Algorithm \ref{alg:A1}.}


\vskip 2mm

Let us illustrate this situation with an small example.

\begin{example}
Let $\mathfrak{a} = ((y^2-x^3)^2,x^2y^3) \subseteq \mathcal{O}_{X, O}$ an ideal. One can compute the log-resolution divisor of \( \mathfrak{a} \) using the algorithm from \cite{ACAMB16}. The log-resolution and its associated divisor \( F \) of $\mathfrak{a}$ are precisely the proper birational morphism \( \pi \) and the divisor \( D \) from Example \ref{ex:example1}.

\vskip 2mm

Namely, \( F = 4E_O + 6E_{p_1} + 12E_{p_2} + 13E_{p_3} + 26E_{p_4} \) and we can take \( f_0 = x, f_1 = y, f_2 = y^2 - x^3 \) as a set of maximal contact elements of the log-resolution \( \pi \). Thus, from the computation in Example \ref{ex:example}, one deduces that
\[ \overline{\mathfrak{a}} = H_F = \pi_* \mathcal{O}_{X'}(F) = ((y^2 - x^3)^2, x^7, x^5y, x^4y^2, x^2y^3, xy^4).\]
\end{example}

The following results follows directly from Theorem \ref{thm:main} and its corollaries.

\begin{theorem} \label{thm:integral-closure}
Let \( \mathfrak{a} \subseteq \mathcal{O}_{X, O} \) be an ideal. There exists a set of generators of the integral closure \( \mathfrak{a} \) that are monomial in any given set of maximal contact elements of the log-resolution of \( \mathfrak{a} \).
\end{theorem}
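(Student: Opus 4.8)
The plan is to reduce the statement to the $\mathfrak{m}$-primary case already covered by Theorem \ref{thm:main}. Write $\mathfrak{a} = (a) \cdot \mathfrak{a}'$ with $a = \gcd(\mathfrak{a})$ and $\mathfrak{a}'$ an $\mathfrak{m}$-primary ideal, as in Section \ref{prelim}. Since integral closure is compatible with extracting a principal factor, namely $\overline{(a)\mathfrak{b}} = (a)\,\overline{\mathfrak{b}}$ for any ideal $\mathfrak{b}$, we get $\overline{\mathfrak{a}} = (a)\cdot\overline{\mathfrak{a}'}$. A log-resolution $\pi : X' \to X$ of $\mathfrak{a}$ is in particular a (possibly non-minimal) log-resolution of both $\mathfrak{a}'$ and $(a)$, so it suffices to produce the desired generators for $\overline{\mathfrak{a}'}$ and then multiply them by $a$, checking along the way that $a$ itself is accounted for in the monomial framework.

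First I would treat the $\mathfrak{m}$-primary factor. By \cite[\S 8.3]{casas} there is a divisor $D \in \Lambda_\pi$ with $\mathfrak{a}'\cdot\mathcal{O}_{X'} = \mathcal{O}_{X'}(-D)$, and the integral closure of $\mathfrak{a}'$ is exactly the complete ideal $H_D$. Now Theorem \ref{thm:main} applies verbatim: Algorithm \ref{alg:A1}, run on $\pi$ and $D$, returns a system of generators of $H_D$ of the form $\mathbf{f}^{\boldsymbol{\alpha}_1}, \dots, \mathbf{f}^{\boldsymbol{\alpha}_r}$, monomial in any prescribed set $\{f_i\}_{i\in I}$ of maximal contact elements of $\pi$. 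Multiplying through by $a$ yields generators $a\,\mathbf{f}^{\boldsymbol{\alpha}_1}, \dots, a\,\mathbf{f}^{\boldsymbol{\alpha}_r}$ of $\overline{\mathfrak{a}}$. It remains to record that $a$, whose resolution is dominated by $\pi$, factors as a product of curvettes $\prod_{p} f_p^{\,\rho_p}$ as recalled after Construction \ref{remark:curvettes}; at the dead-end vertices these curvettes are genuine maximal contact elements of $\pi$, so after enlarging/choosing $\{f_i\}_{i\in I}$ compatibly the generators of $\overline{\mathfrak{a}}$ become honest monomials $\mathbf{f}^{\boldsymbol{\beta}_1}, \dots, \mathbf{f}^{\boldsymbol{\beta}_r}$ in the chosen maximal contact elements. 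Independence of the choice of $\{f_i\}_{i\in I}$ is inherited from the corresponding statement in Theorem \ref{thm:main} (the algorithm only uses equisingularity data), and the topological invariance of the monomial expression follows from Corollary \ref{cor:monomial_exp}.

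The routine points — compatibility of integral closure with a principal factor, and the invocation of Algorithm \ref{alg:A1} — are immediate given the earlier results. The one place requiring care, and the main obstacle, is the bookkeeping for the $\gcd$ factor $a$: one must be explicit about how the branches of $\{a=0\}$ sit inside the cluster $K$ of $\pi$, verifying that the curvettes occurring in a factorization of $a$ do appear (up to power) among the maximal contact elements attached to dead-ends of the dual graph of $\pi$, so that the product $a\,\mathbf{f}^{\boldsymbol{\alpha}_i}$ is genuinely monomial rather than merely a principal multiple of a monomial. Once this identification is in place, the theorem is a direct consequence of Theorem \ref{thm:main} and its corollaries.
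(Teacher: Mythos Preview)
Your core approach coincides with the paper's: reduce to the $\mathfrak{m}$-primary part and invoke Theorem~\ref{thm:main}. In the paper this is literally the entire argument --- the sentence preceding the theorem states that it ``follows directly from Theorem~\ref{thm:main} and its corollaries,'' after the reduction $\mathfrak{a}=(a)\cdot\mathfrak{a}'$ and the identification $\overline{\mathfrak{a}'}=H_F$ already recorded at the start of \S5.1.

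Where you diverge is in your treatment of the principal factor $a$, and this is where a genuine problem appears. Your claim that $a$ becomes a monomial in the maximal contact elements of $\pi$ is not justified: the factorization $a=\prod_p f_p^{\rho_p}$ after Construction~\ref{remark:curvettes} involves curvettes $f_p$ for \emph{arbitrary} $p\in K$, whereas maximal contact elements are by definition attached only to dead-ends of the dual graph of $E_{\mathrm{exc}}$. A branch of $\{a=0\}$ may very well have its strict transform meeting $E_p$ at a vertex $p$ that is not a dead-end of the exceptional configuration, and there is no mechanism for ``enlarging $\{f_i\}_{i\in I}$'' to absorb such a curvette while staying within the class of maximal contact elements as defined in \S\ref{max_contact}. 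So the step ``the generators of $\overline{\mathfrak{a}}$ become honest monomials $\mathbf{f}^{\boldsymbol{\beta}_1},\dots,\mathbf{f}^{\boldsymbol{\beta}_r}$'' does not go through as written.

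The paper sidesteps this entirely: it simply declares that $\mathfrak{a}$ ``can be assumed to be $\mathfrak{m}$-primary'' and proves the monomial statement for $\overline{\mathfrak{a}'}=H_F$. In other words, the paper does not attempt to make $a$ itself a monomial in maximal contact elements; the theorem is effectively a statement about the $\mathfrak{m}$-primary factor. Your first paragraph already contains the correct proof --- drop the second half of your argument about $a$, and you match the paper.
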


\begin{corollary} \label{cor:monomial}
Let $\pi: X' \lra X$ be a proper birational morphism. Any complete ideal $\fa \subseteq \cO_{X,O}$ whose log-resolution is dominated by $\pi$ admits a system of generators given by monomials in any set of maximal contact elements associated to $\pi$.
\end{corollary}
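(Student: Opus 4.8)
The plan is to reduce this statement directly to Theorem \ref{thm:main} by identifying the complete ideal $\fa$ with an ideal of the form $H_D$ for an antinef divisor $D \in \Lambda_\pi$, where $\pi$ is the fixed proper birational morphism (not necessarily the minimal log-resolution of $\fa$). First I would invoke the discussion of Subsection \ref{complete-antinef}: since $\fa$ is a complete $\fM$-primary ideal (after splitting off the gcd as in the Preliminaries, $\fa = (a)\cdot\fa'$ with $\fa'$ being $\fM$-primary, and $(a) = (f) = H_{\mathrm{Div}(\pi^*f)_{\rm exc}}$ is itself of the required type so by multiplicativity it suffices to treat the $\fM$-primary case) whose log-resolution is dominated by $\pi$, Zariski's correspondence gives a unique antinef divisor $D$ in the lattice $\Lambda_\pi$ attached to $\pi$ with $\fa = H_D$. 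The key point that needs a sentence of justification is that this $D$ genuinely lives in $\Lambda_\pi$ — i.e. has exceptional support for $\pi$ — which holds precisely because the minimal log-resolution of $\fa$ is dominated by $\pi$, so that the Cartier divisor cutting out $\fa\cdot\cO_{X'}$ on $X'$ has support contained in $\mathrm{Exc}(\pi)$, and its antinef closure does too.

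Once $\fa = H_D$ with $D \in \Lambda_\pi$ is established, I would apply Theorem \ref{thm:main} verbatim: Algorithm \ref{alg:A1} run on the pair $(\pi, D)$ terminates (Proposition \ref{prop:finiteness}) and returns a set of generators of $H_D$ each of which is a monomial $\mathbf{f}^{\boldsymbol\alpha} = \prod_{i\in I} f_{p_i}^{\alpha_i}$ in the chosen set $\{f_i\}_{i\in I}$ of maximal contact elements of $\pi$. Since the theorem asserts the conclusion for \emph{any} given set of maximal contact elements of $\pi$ (the monomial exponents being independent of the choice, as emphasized in the Remark following Theorem \ref{thm:main} and in Corollary \ref{cor:monomial_exp}), this is exactly the statement of the corollary.

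The only genuine content beyond citing Theorem \ref{thm:main} is therefore the translation step, and the mild subtlety there — which I would expect to be the main (very small) obstacle — is handling the non-$\fM$-primary case cleanly and making sure the antinef divisor $D$ produced by Zariski's theorem has support inside $\mathrm{Exc}(\pi)$ rather than merely inside the exceptional locus of some larger resolution. Both are resolved by the observation that "log-resolution dominated by $\pi$" forces the Cartier divisor $F$ with $\fa\cdot\cO_{X'} = \cO_{X'}(-F)$ to be supported on $\mathrm{Exc}(\pi) \cup (\text{strict transforms})$, and for a complete ideal the antinef representative is purely exceptional, so $D = \widetilde{F}_{\rm exc} \in \Lambda_\pi$; after that, Theorem \ref{thm:main} finishes the proof.
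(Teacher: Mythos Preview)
Your approach is the paper's own: the corollary is stated there as following directly from Theorem \ref{thm:main} (with no further proof), and your reduction---identify $\fa$ with $H_D$ for an antinef $D \in \Lambda_\pi$ via Zariski's correspondence, then invoke Theorem \ref{thm:main}---is exactly the intended translation.

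One correction to your parenthetical handling of the non-$\fM$-primary case: the identity $(a) = H_{\mathrm{Div}(\pi^*a)_{\rm exc}}$ is false, since the right-hand side is $\fM$-primary while the left-hand side is not. More to the point, a principal ideal $(a)$ with $a$ irreducible and strict transform meeting $E$ at a component that is \emph{not} a dead-end cannot be generated by any monomial in the maximal contact elements (such a monomial would have to be a unit times $a$, impossible by unique factorization). The paper, like Theorem \ref{thm:main} itself, is implicitly working with $\fM$-primary complete ideals here (cf.\ the reduction $\fa = (a)\cdot\fa'$ made just before Theorem \ref{thm:integral-closure}); the principal factor is carried along separately and is not claimed to be monomial in the $f_i$. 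So your core argument for the $\fM$-primary case is correct and complete, but drop the claim that the gcd part is ``itself of the required type.''
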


\begin{corollary}\label{cor:monomial_exp2}
The set of monomial expressions returned by Algorithm \ref{alg:A1} for the integral closure of an ideal $\fa$  is an equisingular invariant of $\fa$.
\end{corollary}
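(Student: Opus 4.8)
The plan is to deduce the statement directly from Corollary \ref{cor:monomial_exp} after recasting the integral closure in the divisorial language to which that corollary applies. Recall from the beginning of this subsection that, for a log-resolution $\pi\colon X'\to X$ of $\fa$ with $\fa\cdot\cO_{X'}=\cO_{X'}(-F)$, one has $\overline{\fa}=H_F$; I would fix $\pi$ to be the \emph{minimal} log-resolution, so that $F$ is canonically attached to $\fa$. Using the decomposition $\fa=(a)\cdot\fa'$ with $a=\gcd(\fa)$ and $\fa'$ being $\fM$-primary, one has $\overline{\fa}=(a)\cdot\overline{\fa'}$, and the equisingularity class of $\fa$ prescribes the topological type of the germ $a=0$ together with the equisingularity class of $\fa'$; so it suffices to argue for each factor, and the essential case is that of $\fa$ being $\fM$-primary.

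First I would check that, when $\fa$ is $\fM$-primary, the log-resolution divisor $F$ is an antinef element of $\Lambda_\pi$: it is supported on $E$ because $\pi$ is an isomorphism away from $E$ while $V(\fa)=\{O\}$, and it is antinef because $\cO_{X'}(-F)=\fa\cdot\cO_{X'}$ is generated by global sections, so $-F\cdot E_p\ge 0$ for every exceptional $E_p$. Hence $\overline{\fa}=H_F$ with $F\in\Lambda_\pi$ antinef, which is precisely the input format of Algorithm \ref{alg:A1} and of Corollary \ref{cor:monomial_exp}, applied with $D:=F$.

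Next I would invoke Definition \ref{EquisingIdeal}: if $\fa$ and $\fa'$ are equisingular then their minimal log-resolutions have equal weighted dual graphs, hence the antinef divisors $F$ and $F'$ share the same weighted dual graph --- same underlying graph, same proximity relations (so the same free, satellite and dead-end vertices) and the same integer weights $v_p$. By Corollary \ref{cor:monomial_exp}, the set of monomial expressions output by Algorithm \ref{alg:A1} depends only on this weighted dual graph, since, as extracted in the proof of Theorem \ref{thm:main}, the run consults only the equisingularity types of the maximal contact elements (determined by the dual graph and its dead-end vertices) together with the weights $v_p$ that govern the unloading steps of Algorithm \ref{alg:unloading} and the exponents $n_j$ of step $(i.j.3)$. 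Therefore equisingular ideals produce the same monomial expression for their integral closures.

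The points to be handled carefully --- rather than any genuine difficulty --- are the reduction through $\fa=(a)\cdot\fa'$ and the verification that for $\fM$-primary $\fa$ the minimal log-resolution divisor $F$ is already an antinef element of $\Lambda_\pi$, so that Corollary \ref{cor:monomial_exp} applies verbatim. Granting these, the corollary is an immediate specialization of Corollary \ref{cor:monomial_exp}.
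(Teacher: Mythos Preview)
Your proposal is correct and follows essentially the same approach the paper intends: the paper gives no explicit proof for this corollary, stating only that it ``follows directly from Theorem \ref{thm:main} and its corollaries,'' and your argument makes precisely this derivation explicit by identifying $\overline{\fa}=H_F$ and then invoking Corollary \ref{cor:monomial_exp} together with Definition \ref{EquisingIdeal}. Your added details---the reduction to the $\fM$-primary part and the verification that $F$ is antinef---are routine checks that fill in what the paper leaves tacit.
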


\subsection{Multiplier ideals}

Let \( \pi : X' \longrightarrow X \) be a log-resolution of an ideal \( \mathfrak{a} \subseteq \mathcal{O}_X \) and
let \( F \) be the divisor such that \( \mathfrak{a} \cdot \mathcal{O}_X' = \mathcal{O}_{X'}(-F) \).
The \emph{multiplier ideal} associated to \( \mathfrak{a} \) and some rational
number \( \lambda \in \mathbb{Q}_{\geq 0} \) is defined as
\[ \mathcal{J}(\mathfrak{a}^{\lambda}) = \pi_* \mathcal{O}_{X'}(\lceil K_\pi - \lambda F\rceil), \]
where \( K_\pi \) is the so-called \emph{relative canonical divisor} (see \cite{Laz04} for details).
Multiplier ideals form a discrete nested sequence of ideals
\[ \mathcal{O}_X \supsetneq \mathcal{J}(\mathfrak{a}^{\lambda_0}) \supsetneq \mathcal{J}(\mathfrak{a}^{\lambda_{1}}) \supsetneq \mathcal{J}(\mathfrak{a}^{\lambda_{2}}) \supsetneq \cdots \supsetneq \mathcal{J}(\mathfrak{a}^{\lambda_{i}}) \supsetneq \cdots \]
and the rational numbers $0 < \lambda_0 < \lambda_1 < \cdots $ where an strict inclusion of ideals is achived are
called the \emph{jumping numbers} associated to \( \mathfrak{a} \).

\vskip 2mm

There are general algorithms as those developed by Shibuta \cite{Shi11} and  Berkersch and Leykin \cite{BL10} that,
given a set of generators of $ \mathfrak{a}$, return the list of jumping numbers and a minimal set of generators
of the corresponding multiplier ideals. These algorithms use the theory of Bernstein-Sato polynomials
and have been implemented in {\tt Macaulay2}. However, it is difficult to compute large examples due to the complexity of
these algorithms.

\vskip 2mm

In the case of planar ideals, there are methods given by
J\"arviletho \cite{Jar11}, Naie \cite{Nai09} and Tucker \cite{Tuc10} to compute the set of jumping numbers.
The first two authors of this manuscript and Dachs-Cadefau \cite{ACAMDC} gave an algorithm that computes
sequentially the list of jumping numbers of a planar ideal and the antinef divisor associated to the
corresponding multiplier ideal.

\vskip 2mm

Combining the algorithms developed in \cite{ACAMB16} and \cite{ACAMDC} with
 Algorithm \ref{alg:A1} we may  provide a method that, given a set of generators of a planar ideal $ \mathfrak{a}$, returns the set of jumping numbers and a set of generators of the corresponding multiplier ideals. Namely, we have to perform the following steps:

\vskip 2mm
{\it
$\cdot$ Compute the divisor $F$ associated to the log-resolution of \( \mathfrak{a} \) by  using the algorithm from \cite{ACAMB16}.

\vskip 2mm

$\cdot$  Compute the sequence of jumping numbers $\{ \lambda_j \}_{j\in \bZ_{\geq 0}}$ and the divisor corresponding to the associated
multiplier ideals $\{  \mathcal{J}(\mathfrak{a}^{\lambda_j})\}_{j\in \bZ_{\geq 0}}$ , i.e. the antinef closures $D_{\lambda_j}$ of $\lfloor \lambda_j F - K_{\pi} \rfloor$,  using the main algorithm of \cite{ACAMDC}.

\vskip 2mm

$\cdot$ Compute a set of generators for the ideals $H_{D_{\lambda_j}}$ using Algorithm \ref{alg:A1}.}

\vskip 2mm

This method is illustrated with the following:

\begin{example}
Consider the ideal $\mathfrak{a}=((y^2 - x^3)^3, x^3(y^2 - x^3)^2, x^6y^3) \subseteq \mathcal{O}_{X, O}$.
The log-resolution of $\mathfrak{a}$ can be computed using the algorithm from \cite{ACAMB16} and is represented by means of the following dual-graph:

\tikzstyle{dual}=[circle, draw, fill=black!100, inner sep=0pt, minimum width=4pt]
\begin{center}
\begin{tikzpicture}[level/.style={sibling distance=120mm/#1}]
  \draw (-5,-2) node[dual] {} -- (-5,-1) node[dual] {} -- (-5,0) node[dual] {} -- (-5,1) node[dual] {};
  \draw (-6,-2) node[dual] {} -- (-5,-2) node[dual] {} -- (-4,-2) node[dual] {};
  \draw (-5,-2.3) node {\footnotesize $p_2$};
  \draw (-6,-2.3) node {\footnotesize $O$};
  \draw (-4,-2.3) node {\footnotesize $p_1$};
  \draw (-5.3,-1) node {\footnotesize $p_3$};
  \draw (-5.3,0) node {\footnotesize $p_5$};
  \draw (-5.3,1) node {\footnotesize $p_4$};

  \draw (-10,-2) node[dual] {} -- (-10,-1) node[dual] {} -- (-10,0) node[dual] {} -- (-10,1) node[dual] {};
  \draw (-11,-2) node[dual] {} -- (-10,-2) node[dual] {} -- (-9,-2) node[dual] {};
  \draw (-10,-2.3) node {\footnotesize $18$};
  \draw (-11,-2.3) node {\footnotesize $6$};
  \draw (-9,-2.3) node {\footnotesize $9$};
  \draw (-10.3,-1) node {\footnotesize $20$};
  \draw (-10.3,0) node {\footnotesize $42$};
  \draw (-10.3,1) node {\footnotesize $21$};
\end{tikzpicture}
\end{center}

The divisor  \( F \) such that \( \mathfrak{a} \cdot \mathcal{O}_X' = \mathcal{O}_{X'}(-F) \) is
$F = 6E_O + 9E_1 + 18E_2 + 20E_3 + 21E_4 + 42E_5$.
A set of maximal contact elements for the log-resolution of $\mathfrak{a}$ is, for instance,
$f_0 = x, f_1 = y, f_2 = y^2 - x^3$.

\vskip 2mm

The jumping numbers smaller than 1 computed using the algorithm from \cite{ACAMDC} and generators for the associated multiplier ideals computed using Algorithm \ref{alg:A1} can be found in Table 1. 
\end{example}

\def\arraystretch{1.3}
\begin{table}[!ht] 
\centering
\begin{tabular}{|c|c|}
\hline
\(\lambda_{i}\) & \(\mathcal{J}(\mathfrak{a}^{\lambda_i})\) \\[1.5pt] \hline
\hline
\(\frac{5}{18}\)  &  \( x, y\) \\[1.5pt] \hline
\(\frac{7}{18}\)  &  \( y, x^2\) \\[1.5pt] \hline
\(\frac{4}{9}\)  &  \( x^2, xy, y^2\) \\[1.5pt] \hline
\(\frac{1}{2}\)  &  \( xy, y^2, x^3\) \\[1.5pt] \hline
\(\frac{23}{42}\)  &  \( y^2, x^3, x^2y\) \\[1.5pt] \hline
\(\frac{25}{42}\)  &  \( y^2 - x^3, x^2y, xy^2, x^4\) \\[1.5pt] \hline
\(\frac{11}{18}\)  &  \( x^2y, xy^2, y^3, x^4\) \\[1.5pt] \hline
\(\frac{9}{14}\)  &  \( xy^2, y^3, x^4, x^3y\) \\[1.5pt] \hline
\(\frac{29}{42}\)  &  \( x(y^2 - x^3), y(y^2 - x^3), x^3y, x^2y^2, x^5\) \\[1.5pt] \hline
\(\frac{13}{18}\)  &  \( y^3, x^3y, x^2y^2, x^5\) \\[1.5pt] \hline
\(\frac{31}{42}\)  &  \( y(y^2 - x^3), x^2y^2, xy^3, x^2(y^2 - x^3), x^4y\) \\[1.5pt] \hline
\(\frac{7}{9}\)  &  \( x^2y^2, xy^3, y^4, x^5, x^4y\) \\[1.5pt] \hline
\(\frac{11}{14}\)  &  \( x^2(y^2 - x^3), xy(y^2 - x^3), y^2(y^2 - x^3), x^4y, x^3y^2, x^6\) \\[1.5pt] \hline
\(\frac{5}{6}\)  &  \( xy(y^2 - x^3), y^2(y^2 - x^3), x^3(y^2 - x^3), x^3y^2, x^2y^3, x^5y \) \\[1.5pt] \hline
\(\frac{37}{42}\)  &  \( xy(y^2 - x^3), y^2(y^2 - x^3), x^3(y^2 - x^3), x^2y^3, xy^4, x^5y, x^4y^2, x^7\) \\[1.5pt] \hline
\(\frac{8}{9}\)  &  \( y^2(y^2 - x^3), x^5y, x^3(y^2 - x^3), x^2y^3, x^2y(y^2 - x^3), xy^4, x^4y^2, x^7\) \\[1.5pt] \hline
\(\frac{13}{14}\)  &  \( y^2(y^2 - x^3), x^3(y^2 - x^3), x^2y(y^2 - x^3), xy^4, y^5, x^4y^2, x^3y^3, x^7, x^6y\) \\[1.5pt] \hline
\(\frac{17}{18}\)  &  \( x^2y(y^2 - x^3), xy^2(y^2 - x^3), y^3(y^2 - x^3), x^4y^2, x^4(y^2 - x^3), x^3y^3, x^6y\) \\[1.5pt] \hline
\(\frac{41}{42}\)  &  \( x^2y(y^2 - x^3), xy^2(y^2 - x^3), y^3(y^2 - x^3), x^4(y^2 - x^3), x^3y^3, x^2y^4, x^6y, x^5y^2, x^8\) \\[1.5pt] \hline
\end{tabular}
\vspace{5pt}
\caption{The jumping numbers smaller than 1 and generators of the associated multiplier ideal for $\mathfrak{a}=((y^2 - x^3)^3, x^3(y^2 - x^3)^2, x^6y^3)$.} \label{table:T1}
\end{table}

It is a known result that the multiplier ideals \( \mathcal{J}(\mathfrak{a}^{\lambda}) \) associated to \( \mathfrak{a} \) are the same when taking the completion \( \overline{\mathfrak{a}} \) of \( \mathfrak{a} \), i.e. \( \mathcal{J}(\mathfrak{a}^{\lambda}) = \mathcal{J}(\overline{\mathfrak{a}}^{\lambda}) \). As a corollary of Theorem \ref{thm:main} and Corollary \ref{cor:monomial} we obtain the following result which resembles Howald's theorem \cite{howald} on the fact that multiplier ideals of monomial ideals are also monomial.

\begin{theorem}
Let \( \mathfrak{a} \subseteq \mathcal{O}_{X, O} \) be an ideal and consider  its completion \( \overline{\mathfrak{a}} \), that can be generated by monomials in any given set of maximal contact elements. Then, the multiplier ideals \( \mathcal{J}(\mathfrak{a}^{\lambda}) \) are also generated by  monomials in the same set of maximal contact elements of the log-resolution of \( \mathfrak{a} \).
\end{theorem}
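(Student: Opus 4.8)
The plan is to recognize every multiplier ideal $\mathcal{J}(\mathfrak{a}^{\lambda})$ as an ideal of the shape $H_D$ whose log-resolution is dominated by the fixed log-resolution $\pi$ of $\mathfrak{a}$, and then to quote Corollary \ref{cor:monomial}. Concretely, I would first reduce to the integrally closed case: since $\mathcal{J}(\mathfrak{a}^{\lambda}) = \mathcal{J}(\overline{\mathfrak{a}}^{\lambda})$ (the result recalled right before the statement), and $\mathfrak{a}$ and $\overline{\mathfrak{a}}$ have the same log-resolutions, I fix once and for all a log-resolution $\pi : X' \to X$ of $\mathfrak{a}$ with $\overline{\mathfrak{a}} \cdot \mathcal{O}_{X'} = \mathcal{O}_{X'}(-F)$, together with a set $\{f_i\}_{i \in I}$ of maximal contact elements of $\pi$. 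By hypothesis (i.e.\ by Theorem \ref{thm:integral-closure}) $\overline{\mathfrak{a}}$ is generated by monomials in $\{f_i\}_{i\in I}$, and the claim to be proved is exactly the same statement for each $\mathcal{J}(\mathfrak{a}^{\lambda})$.

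Next I would bring the definition of the multiplier ideal into the form used throughout the paper. Writing $D_{\lambda} := -\lceil K_\pi - \lambda F \rceil = \lfloor \lambda F - K_\pi \rfloor$, one has $\mathcal{J}(\mathfrak{a}^{\lambda}) = \pi_* \mathcal{O}_{X'}(-D_{\lambda})$. The relative canonical divisor $K_\pi$ is supported on $\textrm{Exc}(\pi)$, and rounding creates no new components, so $D_\lambda$ is supported on $\textrm{Exc}(\pi)\cup\textrm{Supp}(F)$; the components of $D_\lambda$ carrying a non-positive coefficient impose no condition on the stalk at $O$, while the non-exceptional part merely records the fixed part of $\overline{\mathfrak{a}}$. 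Hence, by Equation \eqref{eq:ideal} and the unloading procedure of Algorithm \ref{alg:unloading}, the stalk of $\mathcal{J}(\mathfrak{a}^{\lambda})$ equals $H_{\widetilde{D}_\lambda}$, where $\widetilde{D}_\lambda$ is the antinef closure of $D_\lambda$; this $\widetilde{D}_\lambda$ is effective, still supported on $\textrm{Exc}(\pi)\cup\textrm{Supp}(F)$ (so its support has normal crossings), and by the Zariski correspondence of Section \ref{complete-antinef} we get $\mathcal{J}(\mathfrak{a}^{\lambda})\cdot\mathcal{O}_{X'} = \mathcal{O}_{X'}(-\widetilde{D}_\lambda)$. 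Therefore $\mathcal{J}(\mathfrak{a}^{\lambda})$ is a complete ideal whose log-resolution is dominated by $\pi$. (When $\lambda$ lies below the log-canonical threshold, $\widetilde{D}_\lambda = 0$ and $\mathcal{J}(\mathfrak{a}^{\lambda}) = \mathcal{O}_{X,O}$, a harmless edge case.)

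Finally I would invoke Corollary \ref{cor:monomial}: a complete ideal whose log-resolution is dominated by $\pi$ admits a system of generators that are monomials in any set of maximal contact elements of $\pi$. Applying this to $H_{\widetilde{D}_\lambda} = \mathcal{J}(\mathfrak{a}^{\lambda})$ with the fixed set $\{f_i\}_{i\in I}$ yields generators of $\mathcal{J}(\mathfrak{a}^{\lambda})$ that are monomials in $\{f_i\}_{i\in I}$; since $\{f_i\}_{i\in I}$ is a set of maximal contact elements of the log-resolution $\pi$ of $\mathfrak{a}$ — the very set appearing in the hypothesis on $\overline{\mathfrak{a}}$ — this proves the theorem. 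As a byproduct, running Algorithm \ref{alg:A1} on $\widetilde{D}_\lambda$ and applying Corollary \ref{cor:monomial_exp} shows that the monomial exponents obtained for $\mathcal{J}(\mathfrak{a}^{\lambda})$ depend only on the weighted dual graph of $\widetilde{D}_\lambda$, hence only on the equisingularity class of $\mathfrak{a}$ and on $\lambda$.

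There is no genuinely new obstacle here, since all the substance has been front-loaded into Algorithm \ref{alg:A1}, Theorem \ref{thm:main} and its corollaries. The one step that demands care is the bookkeeping in the second paragraph: one must verify that passing from $\pi_*\mathcal{O}_{X'}(\lceil K_\pi - \lambda F\rceil)$ to an ideal $H_D$ with $D$ an antinef divisor on $X'$ introduces no irreducible component outside $\textrm{Exc}(\pi)\cup\textrm{Supp}(F)$, so that $\pi$ stays a log-resolution of the resulting ideal and Corollary \ref{cor:monomial} applies. In the non-$\mathfrak{m}$-primary case it is probably cleanest to split $\mathfrak{a}=(a)\cdot\mathfrak{a}'$ first and use that $\mathcal{J}$ factors through the Cartier divisor $\textrm{div}(a)$, reducing to the $\mathfrak{m}$-primary situation where $D_\lambda$ already lies in $\Lambda_\pi$.
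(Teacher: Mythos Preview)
Your proposal is correct and follows essentially the same route as the paper: the paper states the theorem as an immediate corollary of Theorem \ref{thm:main} and Corollary \ref{cor:monomial}, using that $\mathcal{J}(\mathfrak{a}^{\lambda}) = \mathcal{J}(\overline{\mathfrak{a}}^{\lambda})$ and that each multiplier ideal is a complete ideal whose log-resolution is dominated by $\pi$. You have simply written out in detail the bookkeeping (passing to the antinef closure $\widetilde{D}_\lambda$ of $\lfloor \lambda F - K_\pi\rfloor$ and checking support) that the paper leaves implicit.
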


\subsection{Valuation filtration}
Throughout this subsection we will use the notation from Section \ref{max_contact}. Let  $\xi : f = 0, f \in \mathcal{O}_{X, O}$
 be a plane branch and let $v_\xi$ be the valuation induced by the intersection multiplicity.
Let  $\mathfrak{V}_i$ denote the ideal of all the elements in $\mathcal{O}_{X, O}$ with valuation greater or equal to $i$:
\[
\mathfrak{V}_i := \{ \zeta \in \mathcal{O}_{X, O} \ |\ v_\xi(\zeta) \geq i\}.
\]
These ideals form a filtration in \( \mathcal{O}_{X, O} \):
\begin{equation} \label{eq:filtration-valuation}
\mathcal{O}_{X,O} = \mathfrak{V}_0 \supseteq \mathfrak{V}_1 \supseteq \cdots \supseteq \mathfrak{V}_i \supseteq \mathfrak{V}_{i+1} \supseteq \cdots
\end{equation}
such that \( \mathfrak{V}_i \cdot \mathfrak{V}_j \subset \mathfrak{V}_{i+j} \) and \( \cap_{i \in \mathbb{Z}_{\geq 0}} \mathfrak{V}_i = (f) \). This type of filtration was considered by Teissier in \cite{Teissier}. Since, the ideals \( \mathfrak{V}_i \) are defined by valuations, they are complete, and hence, have the form \( \pi_* \mathcal{O}_{X_i'}(D_i) \) for some effective divisor \( D_i \) in some surface \( X_i' \).

\vskip 2mm

\begin{example}

Consider the germ of plane curve $\xi: f=0$,  \( f = (y^2 - x^3)^2 - x^5y \in \mathcal{O}_{X, O} \).
A resolution morphism \( \pi : X' \longrightarrow X \) of \( \xi \) is the same as the one given
for the ideal in Example \ref{ex:example1}. Using Algorithm \ref{alg:A1} we obtain the
generators of the filtration \eqref{eq:filtration-valuation} until the last $\mathfrak{V}_i$ dominated by the log-resolution of $(f)$, which we collect in Table 2. 
\end{example}

\def\arraystretch{1.3}
\begin{table}[!ht] \label{table:T2}
\centering
\begin{tabular}{|c|c|}
\hline
\(i\) & \(\mathfrak{V}_i\) \\[1.5pt] \hline
\hline
\(1,2,3,4\) & \(x,y\) \\[1.5pt] \hline
\(5, 6\) & \(y,x^2\) \\[1.5pt] \hline
\(7, 8\) & \(xy,x^2,y^2\) \\[1.5pt] \hline
\(9, 10\) & \(xy,y^2,x^3\) \\[1.5pt] \hline
\(11, 12\) & \(x^2y,y^2,x^3\) \\[1.5pt] \hline
\(13\) & \(y^2 - x^3,x^2y,xy^2,x^4\) \\[1.5pt] \hline
\(14\) & \(x^2y,xy^2,y^3,x^4\) \\[1.5pt] \hline
\(15, 16\) & \(xy^2,y^3,x^4,x^3y\) \\[1.5pt] \hline
\(17\) & \(x(y^2 - x^3),y(y^2 - x^3),x^3y,x^2y^2,x^5\) \\[1.5pt] \hline
\(18\) & \(y^3,x^3y,x^2y^2,x^5\) \\[1.5pt] \hline
\(19\) & \(y(y^2 - x^3),x^2(y^2 - x^3),x^4y,x^2y^2,xy^3\) \\[1.5pt] \hline
\(20\) & \(x^4y,x^2y^2,xy^3,y^4,x^5\) \\[1.5pt] \hline
\(21\) & \(x^2(y^2 - x^3),xy(y^2 - x^3),y^2(y^2 - x^3),x^4y,x^3y^2,x^6\) \\[1.5pt] \hline
\(22\) & \(xy^3,y^4,x^4y,x^3y^2,x^6\) \\[1.5pt] \hline
\(23\) & \(xy(y^2 - x^3),y^2(y^2 - x^3),x^3y^2,x^3(y^2 - x^3),x^2y^3,x^5y\) \\[1.5pt] \hline
\(24\) & \(y^4,x^3y^2,x^2y^3,x^6,x^5y\) \\[1.5pt] \hline
\(25\) & \((y^2 - x^3)^2,x^3(y^2 - x^3),x^2y(y^2 - x^3),x^5y,x^4y^2,x^7\) \\[1.5pt] \hline
\(26\) & \((y^2 - x^3)^2,x^2y^3,xy^4,x^5y,x^4y^2,x^7
\) \\[1.5pt] \hline
\end{tabular}
\vspace{5pt}
\caption{The ideals \(\mathfrak{V}_i\) of the filtration associated to the plane branch \( f = (y^2 - x^3)^2 - x^5y \) for \( i = 1, 2, \dots, 26 \).}
\end{table}

\end{document}